\newcommand{\ZZ}{\mathbb {Z}}
\newcommand{\RR}{\mathbb {R}}
\newcommand{\CC}{\mathbb {C}}
\newcommand{\NN}{\mathbb {N}}
\newtheorem{thm}{Theorem}[section]
\newtheorem{prop}[thm]{Proposition}
\newtheorem{cor}[thm]{Corollary}
\newtheorem{lemma}[thm]{Lemma}
\newtheorem{notation}[thm]{Notation}
\newtheorem{defn}[thm]{Definition}
\newtheorem{rem}[thm]{Remark}
\newtheorem{ex}[thm]{Example}
\numberwithin{equation}{section}
\title{On the Alexander polynomial of links in lens spaces}
\author[E. Horvat, B. Gabrov\v sek]{}
\date{\today}
\keywords{links in lens spaces, links in 3-manifolds, Alexander polynomial, skein relation}
\subjclass{57M05, 57M27 (primary); 57M25 (secondary)}
\begin{document}

\maketitle
\medskip	
\centerline{\scshape
Eva Horvat}
\medskip
{\footnotesize
	\centerline{University of Ljubljana, Faculty of Education, SI-1000 Ljubljana, Slovenia}
%	\centerline{A\v sker\v ceva 6, SI-1000 Ljubljana, Slovenia}
	\centerline{eva.horvat@pef.uni-lj.si}
	}
\medskip	
\centerline{\scshape
Bo\v stjan Gabrov\v sek}
\medskip
{\footnotesize
	\centerline{University of Ljubljana, Faculty of Mechanical Engineering, SI-1000 Ljubljana, Slovenia}
	%\centerline{A\v sker\v ceva 6, SI-1000 Ljubljana, Slovenia}
	\centerline{University of Ljubljana, Faculty of Mathematics and Physics, SI-1000 Ljubljana, Slovenia}
%	\centerline{Jadranska 19, SI-1000 Ljubljana, Slovenia}
	\centerline{bostjan.gabrovsek@fs.uni-lj.si}
	}
\bigskip

\begin{abstract}
We show how the Alexander polynomial of links in lens spaces is related to the classical Alexander polynomial of a link in the 3-sphere, obtained by cutting out the exceptional lens space fibre. It follows from these relationship that a certain normalization of the Alexander polynomial satisfies a skein relation in lens spaces. 
\end{abstract}

%\classification{primary 57M27; secondary 57M05, 57M25}
%\keywords{links in lens spaces, links in 3-manifolds, Alexander polynomial, skein relation}

\maketitle

\begin{section}{Introduction \& Background}\label{sec-intro}

%While classical knot theory is a widely branched area of mathematical research, our knowledge about knots and links in other 3-manifolds still holds many questions.
In this paper we are interested in the Alexander polynomial, which is perhaps one of the most extensively studied invariants in classical knot theory. We now know that the Alexander polynomial holds information about sliceness, fiberdness, and knot symmetries.

The first definition of the Alexander polynomial was constructed from the homology of the infinite cyclic cover of knot complement in 1928 by Alexander~\cite{alex}, but it soon became clear that the polynomial can be studied from several different viewpoints. 
In 1962 Milnor observed that the polynomial can be defined through Reidemeister torsion~\cite{milnor}, %While the Alexander polynomial is a well-defined link invariant up to multiplication by the monomial $\pm t^n$, 
a few years later Conway showed that a certain normalization of the polynomial is characterised using local skein relations, although this approach was only popularized due to Kauffman in the 80s~\cite{kauff}. Another notable construction, closely related to Alexander's original idea, arises from the fundamental group of the knot's complement via Fox calculus~\cite{fox}. 

In 1975 Turaev extended Milnor's idea and defined the Alexander polynomial for links in other 3-manifolds~\cite{turaev}.
In 1990 Lin presented the idea of a twisted Alexander polynomial which generalizes the classical Alexander polynomial~\cite{lin}.

% in the sense that the Alexander polynomial is a special case of the twisted version and the polynomials are the same for a link in $S^3$.
While various definitions of the Alexander polynomial for links in $S^3$ coincide, it was not until 2008 that Huynh and Le showed that a normalized version of the Alexander polynomial satisfies the skein relation for links in the projective space~\cite{huynh}. The question whether the Alexander polynomial satisfies a skein relation in other 3-manifolds naturally arises.

%In~\cite{CMM} an explicit presentation of the knot group in lens spaces is given.

In \cite{torres}, Torres showed the renowned Torres-Fox formula that relates the multivariable Alexander polynomial of an $r$-component link $L=K_1\cup K_2\cup \cdots \cup K_r$ to the multivariable Alexander polnomial of the link $L$ with component $K_r$ removed:

% there exists a relationship between the Alexander polynomial of a link and the Alexander polynomial of the link with one component removed. In this paper we show how the Alexader polynomial of a link in lens spaces, defined by Fox calculus, is connected to it's classical counterpart, either by cutting out the exceptional fibre of the lens space (viewed as a Seifert manifold) or replacing it with a trivial one. From this relation we show that the normalized version of the Alexander polynomial of links in lens spaces and other 3-manifolds indeed satisfies a skein relation. 

$$\Delta(t_1,\ldots,t_{r-1},1) = \begin{cases}
\Delta(t_1) (t_1^{l_1}-1)/(t_1-1), & \mbox{if } r=2, \\
\Delta(t_1,\ldots,t_{r-1}) (t_1^{l_1} t_2^{l_2}\cdots t_{r-1}^{l_{r-1}}-1), & \mbox{if } r>2, \\

\end{cases}$$

where $l_i = lk(K_i,K_r)$ is the linking number between the components $K_1$ and $K_r$. A Torres-type formula for the twisted Alexander polynomial was given by Morifuji in~\cite{mor}.

In this paper we show how the Alexander polynomial of a link with one trivial (unknotted) component is related to the Alexander polynomial of the link if we perform $-p/q$ surgery on the trivial component (the latter link can be viewed as a link in the lens space $L(p,q)$).
Using this relation, we show that the normalized version of the Alexander polynomial of links in lens spaces and other 3-manifolds indeed satisfies a skein relation. 

%While in~\cite{huynh} and~\cite{CMM} particular models of lens spaces are used, namely the ``quotient of the 3-ball'' model, in this paper we use a more general construction. By the Lickorish-Wallace theorem~\cite{lick, wallace}, any closed, orientable, connected 3-manifold can be obtained by performing Dehn surgery on a framed link in $S^3$, furthermore, each component of the link can be assumed to be unknotted. Thus, we view the lens space $L(p,q)$ as the resulting space of a $-p/q$ rational surgery performed along the unknot in $S^3$. 
%This approach may be generalized and used for the links in other 3-manifolds.

%Noticable related literature includes~\cite{drob}, where Drobotukhina defined a Jones-type polynomial for the projective space and also \cite{mr2}, where Mroczkowski calculated the HOMFLY-PT and Kauffman bracket skein modules (polynomial invariants defined exclusively by skein relations) of the projective space. In~\cite{mr1} the Kauffman bracket skein module was computed for a family of prism manifolds, and in~\cite{HP} the Kauffman bracket skein module of lens spaces is computed. Recently, the much more complex HOMFLY-PT skein module was computed for lens spaces $L(p,1)$ in~\cite{gmr}, see also~\cite{DLP}. In~\cite{gma2}, a link group presentation for the links in arbitrary Seifert manifolds is given, but again a particular model of the space is used that cannot be generalized to arbitrary 3-manifolds.

The paper is organized as follows. In Section~\ref{sec-fund}, we give an explicit construction for the presentation of the fundamental group of the complement of a link in a lens space. In Section~\ref{sec-fox}, we present a definition of the (twisted) Alexander polynomial via the Alexander-Fox matrix.
% and for the reader's convenience we present alternative proofs of some known results. 
In Section~\ref{sec-main}, the first main result is given (Theorem~\ref{th4}), namely the connection between the Alexander polynomial of a link in a lens space and of its classical counterpart in $S^3$, derived from the connection between a link in $S^3$ and the link obtained by performing a rational Dehn surgery on a trivial component. Finally, in Section~\ref{sec:skein} we show that a normalization of the Alexander polynomial for links in lens spaces satisfies a skein relation (Theorem~\ref{thm:skein1}). We also study a generalization of our results for links in other closed, orientable, connected 3-manifolds (Theorem~\ref{th5}). 

\end{section}

\begin{section}{Presentation of the group of a link in a 3-manifold}\label{sec-fund}
The Alexander polynomial of a link may be derived from a presentation of the link group using Fox calculus~\cite{fox}. A classical link $L$ in the 3-sphere $S^{3}$ has a widely known Wirtinger presentation for  $\pi _{1}(S^{3}\backslash L,*)$, which can be generalized by producing a presentation of the link group for links in the lens spaces $L(p,q)$. Finally, we describe a presentation of the group of a link in any closed, connected, orientable 3-manifold $M$.  

\begin{subsection}{The group of a link in $S^{3}$}
\label{sub11}
We begin with the classical case, where $L$ is a link in $S^{3}$. Choose a basepoint $*\in S^{3}\backslash L$. A diagram of $L$ is a finite collection of arcs $\alpha _{1},\alpha _{2},\ldots ,\alpha _{n}$ that meet at crossings. To each arc $\alpha _{i}$, we associate the homotopy class $x_{i}$ of a simple loop based at $*$ that links $\alpha _{i}$ with linking number 1, while not linking any other arc $\alpha _{j}$ for $j\neq i$. Imagine three arcs $\alpha _{i_{1}},\alpha _{i_{2}}, \alpha _{i_{3}}$ that meet at a crossing of the diagram. The corresponding generators of $\pi _{1}(S^{3}\backslash L,*)$ are subject to the Wirtinger relation $x_{i_{1}}x_{i_{3}}x_{i_{2}}^{-1}x_{i_{3}}^{-1}=1$ if the corresponding crossing is positive, or $x_{i_{1}}x_{i_{3}}^{-1}x_{i_{2}}^{-1}x_{i_{3}}=1$ if the corresponding crossing is negative, see Figure~\ref{figwirt}. We obtain the following result: 
% Wirtinger relations (positive/negative)
\begin{figure}[ht]
	\centering
	\subfigure[positive crossing]{\begin{overpic}[page=1]{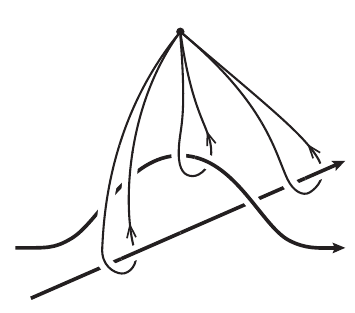}
		\put(23,44){$x_{i_1}$}\put(69,44){$x_{i_2}$}\put(41,52){$x_{i_3}$}
		\label{fig-wirt-1}
	\end{overpic}}\hspace{0ex}
	\subfigure[negative crossing]{\begin{overpic}[page=2]{images}
		\put(23,44){$x_{i_2}$}\put(69,44){$x_{i_1}$}\put(41,52){$x_{i_3}$}
		\label{fig-wirt-2}
	\end{overpic}}
	\caption{Wirtinger relations.}\label{figwirt}
\end{figure}

\begin{thm}[\cite{rolfsen}]
\label{th1}
Let $L\subset S^{3}$ be a link in the 3-sphere, given by a plane diagram with $n$ crossings. Using the introduced notation, the group of the link $L$ has a presentation $$\pi _{1}(S^{3}\backslash L,*)=\left <x_{1},\ldots ,x_{n}|\, r_{1},\ldots ,r_{n}\right >\;,$$ where $r_{i}$ for $i=1,\ldots ,n$ is the Wirtinger relation $x_{i_{1}}x_{i_{3}}x_{i_{2}}^{-1}x_{i_{3}}^{-1}$ (positive crossing) or $x_{i_{1}}x_{i_{3}}^{-1}x_{i_{2}}^{-1}x_{i_{3}}$ (negative crossing), corresponding to the $i$-th crossing of the link $L$. 
\end{thm}

By the abelianization of the link group, all the generators belonging to the same component become homologous, which implies the following result.

\begin{cor}[\cite{rolfsen}]
\label{cor1}
Let $L\subset S^{3}$ be a link with $r$ components. Then the first homology group of the link complement equals $H_{1}(S^{3}\backslash L)\cong \ZZ ^{r}$. 
\end{cor}
%\begin{proof} We abelianize the link group $\pi _{1}(S^{3}\backslash L,*)$ with the Wirtinger presentation, given by Theorem~\ref{th1}. If the generators $x_{1},\ldots ,x_{n}$ commute, then the Wirtinger relations imply that all the generators corresponding to the same link component of $L$ become homologous. Thus, $H_{1}(S^{3}\backslash L)$ is the free abelian group on $r$ generators. The generator of the $i$-th $\ZZ $-summand corresponds to the meridian of the regular neighbourhood of the $i$-th component of $L$. 
%\end{proof}

\end{subsection}
\begin{subsection}{The group of a link in a lens space}
\label{subs0}
Let $p$ and $q$ be coprime integers such that $0 < p < q$. The lens space $L(p,q)$ may be constructed as follows. Describe the 3-sphere $S^{3}$ as a union of two solid tori $V_{1}$ and $V_{2}$ (corresponding to the Heegaard decomposition of genus 1). Choose a meridian $m_{1}$ and a longitude $l_{1}$, generating $\pi _{1}(\partial V_{1})$. Then $L(p,q)$ is obtained from $S^{3}$ by $-p/q$ surgery on $V_{2}$, i.e., we remove $V_{2}$ from $S^{3}$ and then glue it back onto $V_{1}$ by the boundary homeomorphism $h\colon \partial V_{2}\to \partial V_{1}$ which maps the meridian $m_{2}$ of $V_{2}$ to the $(p,-q)$-curve on $\partial V_1$, $$h_{*}(m_{2})=pl_{1}-qm_{1},$$
where $h_*$ is the induced homology map, see Figure~\ref{fig-heeg}. 

% Heegaard homeomorphism for L(p,q)
\begin{figure}[ht]
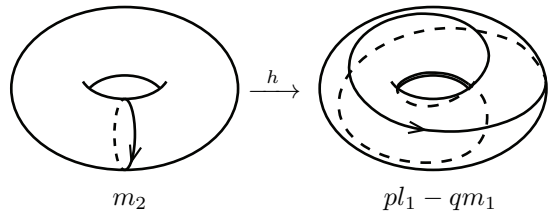

	\centering
	\begin{overpic}[page=4]{images}
		\put(45,-10){$m_2$}
	\end{overpic}\raisebox{1.1cm}{$\xrightarrow{\;\;h\;\;}$}
	\begin{overpic}[page=3]{images}
		\put(30,-10){$pl_1-qm_1$}
	\end{overpic}
	\vspace{1ex}
	\caption{Heegaard decomposition of $L(p,q)$.}\label{fig-heeg}
\end{figure}

The Kirby diagram of $L(p,q)$ is the unknot $U$ bearing the framing $-p/q$, see Figure~\ref{fig-kirby-1} (also see~\cite{kirby, mr1}).
%This description is easy to see in the Kirby diagram of $L(p,q)$, presented in Figure~\ref{fig-kirby-1}, i.e. the unknot $U$ bearing the framing $-p/q$ (see~\cite{kirby} as a good reference on Kirby diagrams). 
The unknot represents the meridian $m_{1}$, on whose regular neighbourhood the surgery is performed. Thus, the lens space $L(p,q)$ is completely defined by three data: $p$, $q$ and the position of the meridian $m_{1}$.

% Kirby diagrams
\begin{figure}[ht]
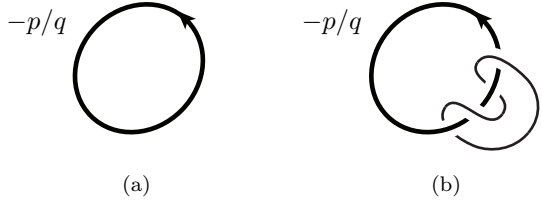

	\centering
	\subfigure[]{
	\begin{overpic}[page=14]{images}
		\put(-16,64){$-p/q$}
	\end{overpic}\label{fig-kirby-1}
	}\hspace{7ex}
	\subfigure[]{
	\begin{overpic}[page=13]{images}
		\put(-23,64){$-p/q$}
	\end{overpic}\label{fig-kirby-2}
	}
	
	\caption{The Kirby diagram for $L(p,q)$ and diagram of a knot in $L(p,q)$.}\label{fig-kirby}
\end{figure}

Now consider a link $L$ in $L(p,q)$,
%Bearing in mind the Heegaard decomposition of $L(p,q)$ into two solid tori $V_{1}$ and $V_{2}$,
for which may assume $L \subset V_{1}$. Represent $L(p,q)$ by its Kirby diagram as an unknot $U$ bearing the framing $-p/q$, and draw $L$ in relation with this diagram, see Figure~\ref{fig-kirby-2}. Such diagrams are also called mixed link diagrams in the language of~\cite{LR1, LR2, lambro, DLP}.
See also~\cite{CMR, BC, mr2, gma2} for alternative diagrams.

\begin{defn}
A link $L\subset L(p,q)$ is \textbf{affine} (sometimes also called local), if it is contained in a 3-ball $B^{3}\subset L(p,q)$.
\end{defn}

%The lens relation, of course, has to be written in terms of the Wirtinger generators of $\pi _{1}(S^{3}\backslash (L\cup U),*)$. Since $l_{1}$ is the longitude of the regular neighbourhood of $S^{3}\backslash U$, it follows that any Wirtinger generator, corresponding to the unknot $U$, represents $l_{1}$. The meridian $m_{1}$ represents the homotopy class of $U$ itself. If $L$ is an affine link in $L(p,q)$, then $U$ may be homotoped to a point and thus $m_{1}$ is trivial in $\pi _{1}(S^{3}\backslash (L\cup U),*)$. If $L$ is not affine, then the meridian $m_{1}$ may be expressed in terms of the Wirtinger generators, corresponding to $L$, as will be explained below. We obtain the following two classical results (see \cite{rolfsen}, exercise 5, page 235):

Let $\nu(L\cup U)$ and $\nu(L)$ be the open regular neighbourhoods of $L\cup U$ and $L$, respectively. The CW-decompositions of $S^3 \setminus \nu(L \cup U)$ and $L(p,q) \setminus \nu (L)$ differ by adding a 2-handle, the meridional disc bounded by the meridian $m_2$ of $V_2$ (Figure~\ref{fig-heeg}) and adding a 3-handle to close the component.
The extra 2-handle adds, in multiplicative notation, the relation $m_{1}^{p}\cdot l_{1}^{-q}$ to the fundamental group.
We obtain the following two well-known results (\cite{rolfsen}).

\begin{thm}[\cite{rolfsen}]
\label{th2}
Let $L$ be a link in the lens space $L(p,q)$. Represent $L(p,q)$ by an unknot $U$ bearing the framing $-p/q$, and draw $L$ in relation with this diagram. Let $\left <x_{1},\ldots ,x_{n}|\, w_{1},\ldots ,w_{n}\right >$ be the Wirtinger presentation for $\pi _{1}(S^{3}\backslash (L\cup U),*)$, obtained from this diagram. Denote by $m_{1}$ and $l_{1}$ the meridian and longitude of the regular neighbourhood of $S^{3}\backslash U$, written in terms of the generators $x_{1},\ldots ,x_{n}$. Then the presentation for the link group $\pi _{1}(L(p,q)\backslash L,*)$ is given by $$\left <x_{1},\ldots ,x_{n}|\, w_{1},\ldots ,w_{n},m_{1}^{p}\cdot l_{1}^{-q}\right >\;.$$
\end{thm}

The above argument naturally extends to performing rational surgeries on links. 
%so it can be generalized to the link group of a link inside any closed, connected, orientable 3-manifold $M$. 
\begin{thm}[\cite{rolfsen}]
Let $L$ be a link in a closed connected orientable 3-manifold $M$. Represent $M$ as the result of a rational surgery on a $k$-component framed link $L_{0}\subset S^{3}$ and draw the Kirby diagram of $L\cup L_{0}$. Let $\left <x_{1},\ldots ,x_{n}|\, w_{1},\ldots ,w_{n}\right >$ be the Wirtinger presentation of $\pi _{1}(S^{3}\backslash (L\cup L_{0}),*)$. For the $i$-th component of $L_{0}$, let $p_{i}/q_i$ be its framing and denote by $m_{i}$ and $l_{i}$ the meridian and longitude of the regular neighbourhood of its complement in $S^{3}$, written in terms of the generators $x_{1},\ldots ,x_{n}$. Then the group of the link $L$ is given by the presentation $$\pi _{1}(M\backslash L,*)=\left <x_{1},\ldots ,x_{n}|\, w_{1},\ldots ,w_{n},m_{1}^{p_{1}}\cdot l_{1}^{-q_1},\ldots ,m_{k}^{p_{k}}\cdot l_{k}^{-q_k}\right >\;.$$
\end{thm}

Starting with the diagram of $L\subset L(p,q)$, we will now introduce a notation for the generators and relations of the link group to make the calculations easier to follow. 

\begin{notation}
\label{not}
Let $L\subset L(p,q)$ be an oriented link. Represent $L(p,q)$ by its Kirby diagram as the $-p/q$ surgery on an oriented unknot $U$ and draw $L$ in relation with this diagram. By Theorem~\ref{th2}, the presentation of the link group of $L$ is obtained from the presentation of the link group $\pi _{1}(S^{3}\backslash (L\cup U),*)$ by adding one relation. The Wirtinger generators corresponding to the link $L$ will be denoted by $x_{i}$, while the generators corresponding to the unknot $U$ will be denoted by $a_{j}$. 

Let $D$ be the obvious disk in $S^{3}$ that is bounded by $U$. Take a small cylinder $C=D\times [-\epsilon ,\epsilon ]$ that is a regular neighbourhood of $D$ in $S^{3}$. If $L$ is an affine link, then we may assume that $L$ does not cross $C$, and denote by $x_{1},\ldots ,x_{n}$ the generators of $\pi _{1}(L(p,q)\backslash L,*)$, corresponding to $L$, and by $a_{1}$ the generator of $\pi _{1}(L(p,q)\backslash L,*)$, corresponding to $U$. 

If $L$ is not affine, we may assume that $L\cap C$ is a union of $k$ parallel strands $s_{1},\ldots ,s_{k}$. Each of the strands is overcrossed by the unknot $U$, which divides $s_{i}$ into two arcs $\overline{\alpha }_{i}\subset D\times [0,\epsilon]$ and $\overline{\alpha }_{k+i}\subset D\times [-\epsilon ,0]$ (for $i=1,\ldots ,k$). The arc $\overline{\alpha }_{i}$ is a part of the overpass $\alpha _{i}$ in the diagram, and we denote by $x_{i}$ its corresponding generator in $\pi _{1}(L(p,q)\backslash L,*)$ for $i=1,\ldots ,2k$, see Figure~\ref{fig-inter}. It may happen that the arcs $\alpha _{i}$ and $\alpha _{j}$ coincide for two different indices $1\leq i,j\leq 2k$; in this case we add the relation $x_{i}=x_{j}$ to the presentation of $\pi _{1}(L(p,q)\backslash L,*)$. We denote by $a_{1}$ the generator of $\pi _{1}(L(p,q)\backslash L,*)$, corresponding to the overpass of the unknot $U$ which overcrosses the strands $s_{1},\ldots ,s_{k}$. Moreover, we denote by $a_{i}$ the generator of $\pi _{1}(L(p,q)\backslash L,*)$, corresponding to the arc of $U$ which lies between the overcrossings of $s_{k-i+1}$ and $s_{k-i+2}$ with $U$ for $i=2,\ldots ,k$.

% Disk D
\begin{figure}[ht]
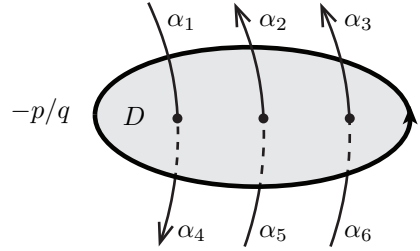

	\centering
	\begin{overpic}[page=12]{images}
		\put(-21,41){$-p/q$}
		\put(12,39){$D$}
		\put(25,68){$\alpha_1$}\put(52,68){$\alpha_2$}\put(77,68){$\alpha_3$}
		\put(28,6){$\alpha_4$}\put(52,6){$\alpha_5$}\put(77,6){$\alpha_6$}
	\end{overpic}
	\caption{Intersections of strands with the disk for $k=3, \epsilon_1 = -1, \epsilon_2=1, \epsilon_3=1$.}\label{fig-inter}
\end{figure}

The orientation of $L$ induces an orientation for each of the strands $s_{1},\ldots ,s_{k}$. For $i=1,\ldots ,k$, let $\epsilon _{i}=1$ if the strand $s_{i}$ is oriented so that the arc $\overline{\alpha }_{i}$ comes after the arc $\overline{\alpha }_{k+i}$, and let $\epsilon _{i}=-1$ otherwise. 
\end{notation}

\begin{cor} 
\label{cor2}
In this notation, the presentation of the link group is given by 
$$ \pi_{1}(L(p,q)\backslash L,*) = \left <x_{1},\ldots ,x_{n},a_{1},\ldots ,a_{k}|\, w_{1},\ldots ,w_{n+k},a_{1}^{p}(x_{1}^{\epsilon _{1}}\ldots x_{k}^{\epsilon _{k}})^{-q}\right >,$$
in particular, if $L$ is affine, we have
$$ \pi_{1}(L(p,q)\backslash L,*) = \left <x_{1},\ldots ,x_{n},a_{1}|\, w_{1},\ldots ,w_{n},a_{1}^{p}\right >.$$
\end{cor}
\begin{proof} It follows directly from Theorem~\ref{th2} and the introduced notation. 
\end{proof}

\begin{lemma} 
\label{lemma6} Any link $L\subset L(p,q)$ with the diagram, described in Notation~\ref{not}, is equivalent to a link whose algebraic intersection with the disk $D$ is an integer between $0$ and $p-1$. \label{sl-lemma}
\end{lemma}
\begin{proof}
The $(p,-q)$-curve is a trivial knot in $L(p,q)$, since it bounds a disk (the meridional disk of $V_2$). A band connected sum with a trivial knot is well defined and keeps the isotopy type of a link in any 3-manifold intact~\cite{gabr} (see also \cite{gmr, lambro}). Since the band connected sum with the $(p,-q)$-curve increases the intersection number by $\pm p$ (the sign depends on the position of the band, see Figure~\ref{fig-slide}) and can be summed any number of times, it follows that any link $L$ is isotopic to a link $L'$ with intersection number between $0$ and $p-1$. 
%The algebraic intersection of $L$ with the disk $D$ equals $\sum _{i=1}^{k}\epsilon _{i}$, while the algebraic intersection of $SL^r(L)$ with $D$ equals $\sum _{i=1}^{k}\epsilon _{i} + r\,p$ for $r\in\ZZ$. It follows that $L$ is equivalent to a link for which $0\leq \sum _{i=1}^{k}\epsilon _{i}\leq p-1$.  
\end{proof}

Slide move
\begin{figure}[ht]
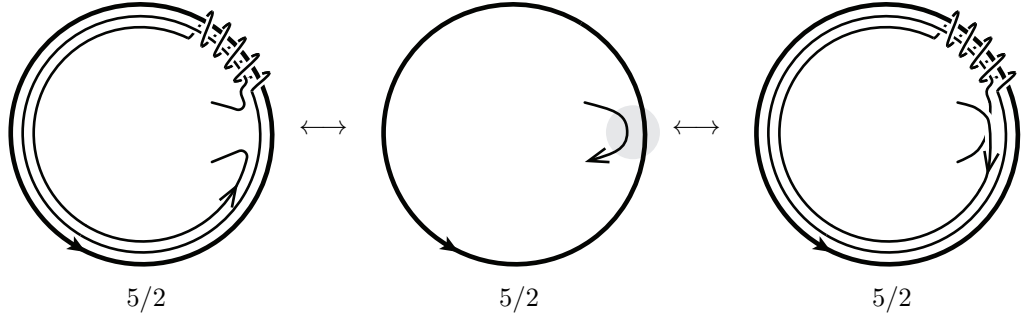

\centering
\begin{overpic}[page=17]{images}
\put(45,-7){$5/2$}
\end{overpic}\raisebox{2cm}{$\;\longleftrightarrow\;$}
\begin{overpic}[page=16]{images}
\put(45,-7){$5/2$}
\end{overpic}\raisebox{2cm}{$\;\longleftrightarrow\;$}
\begin{overpic}[page=27]{images}
\put(45,-7){$5/2$}
\end{overpic}
\vspace{1ex}
\caption{Two possible band connected sums with the meridian in $L(5,2)$.}\label{fig-slide}
\end{figure}

From now on, if not stated otherwise, we will assume that any link $L\subset L(p,q)$ satisfies $0\leq \sum _{i=1}^{k}\epsilon _{i}\leq p-1$. 

To end this Section, we describe the abelianization of the link group of $L$.
% The following Lemma and Corollary are rewritten from~\cite[Lemma 4, Corollary 5]{CMM}:
\begin{lemma}[\cite{CMM}, Lemma 4]
\label{lemma1}
Let $K\subset L(p,q)$ be an oriented knot. Represent $L(p,q)$ as the $-p/q$ surgery on an unknot $U$, and draw $K$ in relation with this diagram. Let $D$ be the obvious disk in $S^{3}$, bounded by $U$. Denote by $f(K)$ the algebraic intersection number of $K$ and $D$. Then the homology class of $K$ inside $H_{1}(L(p,q))\cong \ZZ _{p}$ is equal to $$[K]=q\cdot f(K)\, \textrm{mod }p\;.$$
\end{lemma}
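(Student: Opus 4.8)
The plan is to compute the homology class $[K]$ by abelianizing the link group presentation and tracking exactly how the lens relation constrains the meridian of $K$. By Corollary~\ref{cor2}, the group $\pi_1(L(p,q)\setminus K,*)$ is generated by the Wirtinger generators $x_1,\ldots,x_n$ coming from $K$ together with the generators $a_1,\ldots,a_k$ coming from $U$, subject to the Wirtinger relations $w_j$ and the single lens relation $a_1^{p}(x_1^{\epsilon_1}\cdots x_k^{\epsilon_k})^{-q}$. First I would pass to $H_1(L(p,q)\setminus K)$ by abelianizing: as in the proof of Corollary~\ref{cor1}, the Wirtinger relations force all generators $x_i$ belonging to $K$ to become a single class, call it $t$ (the meridian of $K$), and all the $a_j$ on $U$ to become a single class, call it $u$ (the meridian of $U$).

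Next I would read off what the lens relation becomes after abelianization. The product $x_1^{\epsilon_1}\cdots x_k^{\epsilon_k}$ collapses to $t^{\,\epsilon_1+\cdots+\epsilon_k}=t^{\,f(K)}$, since by definition $f(K)=\sum_{i=1}^k \epsilon_i$ is the algebraic intersection number of $K$ with $D$; meanwhile $a_1^{p}$ becomes $u^{p}$. Thus the lens relation reads additively as $p\,u - q\,f(K)\,t = 0$ in $H_1(L(p,q)\setminus K)$. To recover $H_1(L(p,q))$ itself I would then fill back in the solid torus neighbourhood of $K$, i.e.\ kill the meridian $t$ of $K$ by setting $t=0$. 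This leaves the group generated by $u$ subject to $p\,u = 0$, giving $H_1(L(p,q))\cong \ZZ_p$ with $u$ a generator, which is the standard identification of the homology of a lens space.

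Finally, with $K$ no longer removed, the class $[K]$ is represented in $H_1(L(p,q))$ by the longitude of $K$, and I would express this longitude in the surviving generator $u$. The key geometric fact is that $K$ runs parallel to the core of $V_1$ and links the surgery unknot $U$ algebraically $f(K)$ times, so in $H_1(S^3\setminus U)\cong\ZZ$ the class of $K$ equals $f(K)$ times the meridian of $U$; after the surgery that sends the meridian of $U$ to the generator $u$, this gives $[K]=f(K)\cdot u$. Combining with the relation $p\,u=0$ and the identification $u \mapsto q \pmod p$ dictated by the gluing map $h_*(m_2)=p l_1 - q m_1$, one obtains $[K]=q\cdot f(K)\bmod p$, as claimed.

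I expect the main obstacle to be bookkeeping the identification of the generator of $H_1(L(p,q))\cong\ZZ_p$ with the correct residue, namely showing that the abstract generator surviving the abelianization corresponds to $q$ rather than $1$ in $\ZZ_p$. This is precisely where the asymmetry between $p$ and $q$ in the surgery coefficient enters: one must carefully invoke the gluing homeomorphism $h_*(m_2)=p l_1 - q m_1$ and the relation $m_1^{p} l_1^{-q}$ from Theorem~\ref{th2} to see that the meridian $m_1$ of $U$, which carries the class of $K$, equals $q$ times the standard generator modulo $p$. Since $\gcd(p,q)=1$, this identification is an isomorphism, and the factor $q$ is exactly what produces the stated formula rather than simply $[K]=f(K)$.
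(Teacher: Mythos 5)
Your route is genuinely different from the paper's. The paper never touches the group presentation: it observes that the glued-in $2$-cell has boundary $p\,l_{1}-q\,m_{1}$, so it runs $q$ times in the direction parallel to $U$, hence each transverse intersection point of $K$ with $D$ forces $q$ signed intersection points of $K$ with the $2$-cell, and summing the signed intersections yields $q\cdot f(K)\bmod p$ in one stroke. You instead abelianize the presentation of Corollary~\ref{cor2} and kill the meridian of $K$ to recover $H_{1}(L(p,q))$. The steps you actually carry out are correct: the lens relation abelianizes to $p\,u-q\,f(K)\,t=0$, filling in $K$ kills $t$ and leaves $\ZZ_{p}$ generated by $u$ (the class of a Wirtinger generator of $U$, i.e.\ $l_{1}$ in the paper's conventions --- note that what you call ``the meridian $m_{1}$ of $U$'' is denoted $l_{1}$ here, while $m_{1}$ is the longitude of $U$), and $[K]=f(K)\cdot u$ because $K$ links $U$ algebraically $f(K)$ times.

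The gap is that your argument stops exactly where the content of the lemma lives. Since $\gcd(p,q)=1$, the class $u$ is itself a generator of $\ZZ_{p}$, so everything you prove is equally consistent with the formula $[K]=f(K)\bmod p$; the factor $q$ resides entirely in the assertion ``$u\mapsto q$,'' which you flag as the main obstacle but never establish. It is not a bookkeeping formality, and it does not follow from $h_{*}(m_{2})=p\,l_{1}-q\,m_{1}$ alone: one must fix the standard generator of $H_{1}(L(p,q))$ (the core $c_{2}$ of the reglued solid torus $V_{2}$, equivalently the $1$-cell of the standard CW structure) and compute $u$ against it using the \emph{full} gluing matrix. Concretely, complete the gluing to $h_{*}(m_{2})=p\,l_{1}-q\,m_{1}$, $h_{*}(\lambda_{2})=r\,l_{1}+s\,m_{1}$ with $ps+qr=1$; inverting gives $l_{1}=s\,h_{*}(m_{2})+q\,h_{*}(\lambda_{2})$, and since $m_{2}$ dies in $H_{1}(V_{2})$ and $m_{1}$ dies in $H_{1}(V_{1})$, one finds $u=l_{1}=q\,c_{2}$ in $H_{1}(L(p,q))$, whence $[K]=f(K)\,u=q\,f(K)\,c_{2}$. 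Without this computation (or the paper's direct intersection count with the $2$-cell, which produces the $q$ geometrically), the proof does not distinguish the claimed formula from $[K]=f(K)$.
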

\begin{proof} The space $L(p,q)$ is obtained from $S^{3}$ by removing the regular neighbourhood $\nu(U)$ of $U$ and gluing in a 2-cell with the boundary $p\cdot l_{1}-q\cdot m_{1}$, where $l_{1}\in \pi _{1}(\partial (\nu(U)))$ is the meridian of $\nu(U)$ and $m_{1}\in \pi _{1}(\partial (\nu(U)))$ is the longitude of $U$. For each positive (negative) point of intersection between $K$ and $D$, there are $q$ positive (negative) points of intersection between $K$ and the 2-cell of $L(p,q)$. By summing up all the signed intersections, the above formula is obtained. 
\end{proof}

\begin{cor}[\cite{CMM}, Corollary 5]
\label{cor3}
Let $L\subset L(p,q)$ be a link with components $L_{1},\ldots ,L_{r}$. Denote by $c_{i}$ the homology class of $L_{i}$ in $H_{1}(L(p,q))\cong \ZZ _{p}$ for $i=1,\ldots r$. The first homology group of the link complement equals $$H_{1}(L(p,q)\backslash L)\cong \ZZ ^{r}\oplus \ZZ _{d}\;,$$ where $d=gcd(c_{1},\ldots ,c_{r},p)$.
\end{cor}
\begin{proof} Represent $L(p,q)$ by the Kirby diagram as the $-p/q$ surgery on an unknot $U$, and draw $L$ in relation with this diagram. Let $D$ be the obvious disk in $S^{3}$, bounded by $U$. For $i=1,\ldots ,r$ denote by $f(L_{i})$ the algebraic intersection number of $L_{i}$ and $D$. We abelianize the link group $\pi _{1}(L(p,q)\backslash L,*)$, whose presentation is given by Corollary~\ref{cor2}. Once the generators commute, then, by the Wirtinger relations $w_{1},\ldots ,w_{n+k}$, all the generators, corresponding to the same link component of $L$, become homologous. Thus, $H_{1}(L(p,q)\backslash L)$ is an abelian group with $r$ free generators $g_{1},\ldots ,g_{r}$, corresponding to the components of the link $L$, and the generator $a_{1}$, corresponding to the unknot $U$. The lens relation, when abelianized, becomes $$p\cdot a_{1}-q\cdot (f(L_{1})g_{1}+\ldots +f(L_{r})g_{r})=0\;.$$ By Lemma~\ref{lemma1}, this is the same as $p\cdot a_{1}-(c_{1}g_{1}+\ldots +c_{r}g_{r})=0$, and it follows that $d=\textrm{gcd }(c_{1},\ldots ,c_{r},p)$. 
\end{proof}

\end{subsection}
\end{section}

\begin{section}{The Alexander-Fox matrix of a link in a lens space}\label{sec-fox}

\begin{subsection}{The construction and definitions}
Given a presentation of the group of a link, one may calculate its Alexander polynomial using Fox calculus. We shortly recall the following construction from~\cite{wada}. Let $$\mathcal{P}=\left <x_{1},\ldots ,x_{n}|\, r_{1},\ldots ,r_{m}\right >$$ be a presentation of a group $G$ and denote by  $H=G/G'$ its abelianization. Let $F=\left <x_{1},\ldots ,x_{n}\right >$ be the corresponding free group. We apply the chain of maps $$\ZZ F\stackrel{\frac{\partial }{\partial x}}{\longrightarrow }\ZZ F\stackrel{\gamma }{\longrightarrow}\ZZ G\stackrel{\alpha }{\longrightarrow}\ZZ H\;,$$ where $\frac{\partial }{\partial x}$ denotes the Fox differential, $\gamma $ is the quotient map by the relations $r_{1},\ldots ,r_{m}$ and $\alpha $ is the abelianization map. The \textit{Alexander-Fox matrix} of the presentation $\mathcal{P}$ is the matrix $A=\left [a_{i,j}\right ]$, where $a_{i,j}=\alpha (\gamma (\frac{\partial r_{i}}{\partial x_{j}}))$ for $i=1,\ldots ,m$ and $j=1,\ldots n$. For $k=1,\ldots ,\min \{m-1,n-1\}$, the \textit{$k$-th elementary ideal} $E_{k}(\mathcal{P})$ is the ideal of $\ZZ H$, generated by the determinants of all the $(n-k)$ minors of $A$. The \textit{first elementary ideal} $E_{1}(\mathcal{P})$ is the ideal of $\ZZ H$, generated by the determinants of all the $(n-1)$ minors of $A$. 

\begin{defn}
Let $L\subset S^{3}$ be a link, and let $E_{k}(\mathcal{P})$ be the $k$-th elementary ideal, obtained from a presentation $\mathcal{P}$ of $\pi _{1}(S^{3}\backslash L,*)$. Then the \textbf{$k$-th link polynomial} $\Delta _{k}(L)$ is the generator of the smallest principal ideal containing $E_{k}(\mathcal{P})$. The \textbf{Alexander polynomial} of $L$, denoted by $\Delta (L)$, is the first link polynomial of $L$. 
\end{defn}

For a classical link $L\subset S^{3}$, the abelianization of $\pi _{1}(S^{3}\backslash L,*)$ is the free abelian group, whose generators correspond to the components of $L$, see Corollary~\ref{cor1}. For a link in $L(p,q)$, the abelianization of its link group may also contain torsion, as we know by Corollary~\ref{cor3}. In this case, we need the notion of a twisted Alexander polynomial. We recall the following from~\cite{CMM}. 

Let $G$ be a group with a finite presentation $\mathcal{P}$ and abelianization $H=G/G'$ and denote $K=H/Tors(H)$. Then every homomorphism $\sigma \colon Tors(H)\to \CC ^{*}=\CC \backslash \{0\}$ determines a twisted Alexander polynomial $\Delta ^{\sigma }(\mathcal{P})$ as follows. Choosing a splitting $H=Tors(H)\times K$, $\sigma $ defines a ring homomorphism $\sigma \colon \ZZ [H]\to \CC [K]$ sending $(f,g)\in Tors(H)\times K$ to $\sigma (f)g$. Thus we apply the chain of maps $$\ZZ F\stackrel{\frac{\partial }{\partial x}}{\longrightarrow }\ZZ F\stackrel{\gamma }{\longrightarrow}\ZZ G\stackrel{\alpha }{\longrightarrow}\ZZ [H]\stackrel{\sigma }{\longrightarrow}\CC [K]$$ and obtain the $\sigma $-twisted Alexander matrix $A^{\sigma }=\left [\sigma (\alpha (\gamma (\frac{\partial r_{i}}{\partial x_{j}})))\right ]$. The twisted Alexander polynomial is then defined by $\Delta ^{\sigma }(\mathcal{P})=\textrm{gcd}(\sigma (E_{1}(\mathcal{P})))$.    

\begin{defn}
Let $L\subset L(p,q)$ be a link in the lens space. For any presentation $\mathcal{P}$ of the link group $\pi _{1}(L(p,q)\backslash L,*)$, we may define the following. 

The \textbf{Alexander polynomial} of $L$, denoted by $\Delta (L)$, is the generator of the smallest principal ideal containing $E_{1}(\mathcal{P})$. 

For any homomorphism $\sigma \colon Tors(H_{1}(L(p,q)\backslash L))\to \CC ^{*}$, the \textbf{$\sigma $-twisted Alexander polynomial} of $L$ is $\Delta ^{\sigma }(L)=\textrm{gcd}(\sigma (E_{1}(\mathcal{P})))$.
\end{defn}

We know from Corollary~\ref{cor3} that the torsion subgroup of $H_{1}(L(p,q)\backslash L)$ is the group $\ZZ _{d}$. Thus, the image of the group homomorphism $\sigma \colon Tors(H_{1}(L(p,q)\backslash L))\to \CC ^{*}$ is contained in the cyclic group, generated by $\zeta $, the $d$-th root of the unity. The $\sigma $-twisted Alexander polynomial $\Delta ^{\sigma }(L)\in \ZZ [\zeta ][K]$ is defined up to multiplication by $\zeta ^{j}g$, with $g\in K$. When the chosen generator $gen$ of $Tors(H_{1}(L(p,q)\backslash L))$ will be clear from the context, we will denote by $\Delta ^{\mu }(L)$ the $\sigma $-twisted Alexander polynomial, for which $\sigma (gen)=\mu \in \CC ^{*}$.  

\end{subsection}

\begin{subsection}{The calculation of the Alexander-Fox matrix}
\label{subs3}
In this subsection we will use the presentation of the link group $\pi _{1}(L(p,q)\backslash L,*)$ to calculate the Fox differentials and obtain the Alexander-Fox matrix $A_{L}$. 

Let $L\subset L(p,q)$ be a link, given by a diagram, described in Notation~\ref{not}. The diagram of $L$ consists of two parts: the first part is contained in the cylinder $C$, and the second part is the diagram in the rest of $S^{3}$. The first part of the diagram determines the Wirtinger relations, corresponding to the crossings between $L$ and $U$, and the lens relation. The second part of the diagram determines the Wirtinger relations, corresponding to the crossings within the link $L$. Using the notation, described in Notation~\ref{not}, we will now write down the relations, determined by the first part of the diagram.

If $L$ is an affine link, then the first part of the diagram consists merely of the unknot $U$, determining a single relation $l\colon a_{1}^{p}$.

If $L$ is not affine, then the first part of the diagram contains $2k$ crossings. By Lemma~\ref{lemma6} we may assume that $0\leq \sum _{i=1}^{k}\epsilon _{i}\leq p-1$. For $i=1,\ldots ,k$, there is a crossing where $U$ overcrosses the strand $s_{i}$, yielding the Wirtinger relation $q_{i}\colon a_{1}x_{k+i}^{\epsilon _{i}}a_{1}^{-1}x_{i}^{-\epsilon _{i}}$. Moreover, for $i=1,\ldots ,k$ there is a crossing where the strand $s_{i}$ overcrosses $U$, yielding the Wirtinger relation $r_{i}\colon x_{i}^{\epsilon _{i}}a_{k-i+1}x_{i}^{-\epsilon _{i}}a_{k-i+2\textrm{(mod $k$)}}^{-1}$. Finally, the lens relation is $l\colon a_{1}^{p}\left (x_{1}^{\epsilon _{1}}\ldots x_{k}^{\epsilon _{k}}\right )^{-q}$. The presentation of the link group is $$\pi _{1}(L(p,q)\backslash L,*)=\left <x_{1},\ldots ,x_{n},a_{1},\ldots ,a_{k}|\, q_{1},\ldots ,q_{k},r_{1},\ldots ,r_{k},w_{1},\ldots ,w_{n-k},l\right >\;,$$ where $w_{1},\ldots ,w_{n-k}$ denote the Wirtinger relations, corresponding to the crossings within the link $L$. 

Now we calculate the Fox differentials of the known relations, apply the quotient map $\gamma $ by the relations and abelianize to obtain the Alexander-Fox matrix. Since the lens relation is essentially different from the other Wirtinger relations, this process is done in two steps, as will be described in the following Proposition. 

\begin{defn} For a link $L\subset L(p,q)$, given by the diagram described in Notation~\ref{not}, we denote $\overline{k}=\sum _{i=1}^{k}\epsilon _{i}$, $p'=\frac{p}{d}$ and $k'=\frac{\overline{k}}{d}$, where $d=\gcd \{p,\overline{k}\}$. 
\end{defn}

\begin{prop} 
\label{prop5}
For a link $L\subset L(p,q)$, let $$\mathcal{P}=\left <x_{1},\ldots ,x_{n},a_{1},\ldots ,a_{k}|\, q_{1},\ldots ,q_{k},r_{1},\ldots ,r_{k},w_{1},\ldots ,w_{n-k},l\right >$$ be the link group presentation, described above. Let $M$ be the matrix of the Fox differentials of $\mathcal{P}$, and denote by $A(x,a)$ the matrix we obtain from $M$ by identifying $x_{i}=x$ for $i=1\ldots ,n$ and $a_{j}=a$ for $j=1,\ldots k$. Then the Alexander-Fox matrix of $L$ is given by $$A_{L}(t)=A(t^{\frac{p}{d}},t^{\frac{q\overline{k}}{d}})=A(t^{p'},t^{qk'})\;,$$ while the $\mu $-twisted Alexander-Fox matrix is given by $A^{\mu }_{L}(t)=A(t^{p'},\nu t^{qk'})$, where $\mu \in \CC ^{*}$ is a $d$-th root of unity and $\nu $ is any complex root of the polynomial $z^{p'}-\mu $.   
\end{prop}
\begin{proof} 
As the Fox differentials of all the relations in the given presentation are calculated, we apply on them the quotient map $\gamma $ by the relations, followed by the abelianization $\alpha $.  
When applying the homomorphism $\alpha \circ \gamma $ with respect to the Wirtinger relations, all the generators, corresponding to the same component of the link $L\cup U$, become identified. We thus identify $\alpha (\gamma (a_{1}))=\ldots =\alpha (\gamma (a_{k}))=a$. Since we will calculate the one-variable Alexander polynomial of $L$, we  also identify $\alpha (\gamma (x_{i}))=x$ for $i=1\ldots ,n$. By applying $\alpha \circ \gamma $ with respect to all relations except the lens relation, we obtain the two-variable matrix $A(x,a)$, with $x$ corresponding to the link $L$ and $a$ corresponding to the unknot $U$. 

The Alexander-Fox matrix of $L$ is calculated from $A(x,a)$ by applying $\alpha \circ \gamma $ with respect to the lens relation and thus identifying $$a^{p}=\alpha (\gamma (a_{1}))^{p}=\alpha (\gamma (a_{1}^{p}))=\alpha (\gamma ((x_{1}^{\epsilon _{1}}\ldots x_{k}^{\epsilon _{k}})^{q}))=x^{q\overline{k}}\;.$$ Therefore, the Alexander-Fox matrix $A_{L}$ of the link $L$ is obtained from $A(x,a)$ by the substitution $A_{L}(t)=A(t^{\frac{p}{d}},t^{\frac{q\overline{k}}{d}})=A(t^{p'},t^{qk'})$. 

By Corollary~\ref{cor3}, the torsion of the abelianized link group is the group $\ZZ _{d}$. If there is nontrivial torsion, the lens relation becomes  $(a^{p'}x^{-qk'})^{d}=1$ and the homomorphism $\sigma \colon \ZZ [H]\to \CC [K]$ sends the torsion generator $a^{p'}x^{-qk'}$ to $\mu $. It follows that the $\mu $-twisted Alexander-Fox matrix $A^{\mu }_{L}$ is obtained from $A(x,a)$ by the substitution $A^{\mu }_{L}(t)=A(t^{p'},\nu t^{qk'})$.
\end{proof}

\begin{rem} 
\label{rem1} Observe that $\overline{k}$ is actually the algebraic intersection number of $L$ and the disk $D$. If $L\subset L(p,q)$ is a link with components $L_{1},\ldots ,L_{r}$, we have by Lemma~\ref{lemma1} $[L]=\sum _{i=1}^{r}[L_{i}]=(q\overline{k})(\textrm{mod }p)\in H_{1}(L(p,q))$.
Since $p$ and $q$ are coprime, it follows that $d=\gcd \{p,\overline{k}\}=\gcd \{p,q\overline{k}\}=\gcd \{p,[L]\}$ and thus the number $p'$ may be more invariantly defined as $p'=\frac{p}{\gcd \{p,[L]\}}$.
\end{rem}
We calculate the matrix $A(x,a)=$
\begin{xalignat}{1}
\label{Axa}
& \bordermatrix{~ & w_{1} & \ldots & w_{n-k} & q_{k} & \ldots & q_{1} & r_{k} & \ldots & r_{1} & l \cr
x_{n} & \quad & \quad & \quad & \quad & \quad & \quad & \quad & \quad & \quad & \quad \cr
\vdots & \quad & B_{1} & \quad & \quad & 0 & \quad & \quad & 0 & \quad & 0 \cr
x_{2k+1} & \quad & \quad & \quad & \quad & \quad & \quad & \quad & \quad & \quad & \quad \cr
x_{2k} & \quad & \quad & \quad & a\phi _{k} & \quad & \quad & \quad & \quad & \quad & \quad \cr 
\vdots & \quad & B_{3} & \quad & \quad & \ddots & \quad & \quad & 0 & \quad & 0 \cr
x_{k+1} & \quad & \quad & \quad & \quad & \quad & a\phi _{1} & \quad & \quad & \quad & \quad \cr
x_{k} & \quad & \quad & \quad & -\phi _{k} & \quad & \quad & (1-a)\phi _{k} & \quad & \quad & \beta _{k} \cr
\vdots & \quad & B_{2} & \quad & \quad & \ddots & \quad & \quad & \ddots & \quad & \vdots \cr
x_{1} & \quad & \quad & \quad & \quad & \quad & -\phi _{1} & \quad & \quad & (1-a)\phi _{1} & \beta _{1} \cr
a_{1} & \quad & \quad & \quad & (1-x)\phi _{k} & \ldots & (1-x)\phi _{1} & x^{\epsilon _{k}} & \quad & -1 & \theta \cr
\vdots & \quad & 0 & \quad & \quad & 0 & \quad & -1 & \ddots & \quad & 0 \cr
a_{k} & \quad & \quad & \quad & \quad & \quad & \quad & \quad & -1 & x^{\epsilon _{1}} & \quad \cr } 
\end{xalignat}
where the missing entries are all zero submatrices and $\theta =1+a+\ldots +a^{p-1}$, 
\begin{xalignat*}{1}
\beta _{i}=-(1+x^{\overline{k}}+\ldots +x^{(q-1)\overline{k}})x^{\sum _{j=1}^{i-1}\epsilon _{j}}\phi _{i}
\end{xalignat*} and
\begin{xalignat*}{1}
\phi _{i}=\begin{cases}
1 &  \textrm{ if }\epsilon _{i}=1, \\
-x^{-1}&\textrm{ if }\epsilon _{i}=-1.
\end{cases}
\end{xalignat*} for $i=1,\ldots ,k$. The $-1$ entries in the right lower part of \ref{Axa} are situated in row $a_{(k-i+2)\textrm{mod} k}$ and column $r_{i}$ for $i=1,\ldots ,k-1$. The entries $B_{1},B_{2},B_{3}$ represent the Fox differentials of the Wirtinger relations $w_{1},\ldots ,w_{n-k}$ of the crossings within the link $L$. 

\end{subsection}
\end{section}

\begin{section}{The main results}\label{sec-main}
In this Section, we describe a relation between the Alexander polynomials of a link in the lens space and of its classical counterpart in $S^{3}$. Let $L\subset L(p,q)$ be a link, given by a diagram, described in Notation~\ref{not}. Denote by $L'\subset S^{3}$ the classical link we obtain from $L$ by ignoring the surgery along the unknot $U$. We will study the relation between the Alexander polynomial $\Delta (L)$ of $L\subset L(p,q)$ and the Alexander polynomials of the classical links $\Delta (L')$ and $\Delta (L'\cup U)$. 
Firstly, we observe the case of an affine link in the lens space.

\begin{cor}[\cite{CMM}, Proposition 7]
\label{cor0}
Let $L\subset L(p,q)$ be an affine link. Then for its twisted Alexander polynomials we have $\Delta ^{1}(L)=p\cdot \Delta  (L')$ and $\Delta ^{\mu }(L)=0$ for $\mu \neq 1$.  
\end{cor}
\begin{proof} By Corollary~\ref{cor2}, the link group of an affine link has a presentation $$\pi _{1}(L(p,q)\backslash L,*)=\left <x_{1},\ldots ,x_{n},a_{1}|\, w_{1},\ldots ,w_{n},a_{1}^{p}\right > \;.$$ Denoting by $A_{L}$ and $A_{L'}$ the Alexander-Fox matrices for $L$ and $L'$ respectively, $A_{L}$ equals $A_{L'}$ with one additional row (corresponding to $a_{1}$) and column (corresponding to the lens relation $l\colon a_{1}^{p}$). From the lens relation it follows that the abelianization of $\pi _{1}(L(p,q)\backslash L,*)$ has the torsion subgroup $\ZZ _{p}$. If $\sigma \colon \ZZ _{p}\to \CC ^{*}$ is the homomorphism, which takes the generator $a_{1}$ to $\mu \neq 1$, we have $\sigma (\alpha (\gamma (\frac{\partial l}{\partial a_{1}})))=\frac{\mu ^{p}-1}{\mu -1}=0$.

For the homomorphism $\sigma$ which takes the generator $a_{1}$ to 1, we have $\sigma (\alpha (\gamma (\frac{\partial l}{\partial a_{1}})))=p$, and this is the only nonzero entry in the last row (and column) of the matrix $A_{L}$. For the minors of $A_{L}$, we observe the following. Since the Wirtinger presentation of a classical link group has deficiency one, we have $\det A_{L}^{n+1,n+1}=\det A_{L'}=0$. Moreover, if we remove from $A_{L}$ either any row and the last column or the last row and any column, then the remaining matrix is singular. It follows that
\begin{xalignat*}{1}
\det A_{L}^{i,j}=\left \{ 
\begin{array}{lr}
p\cdot \det A_{L'}^{i,j} & \textrm{ if }1\leq i,j\leq n,\\
0 & \textrm{ if $i=n+1$ or $j=n+1$},
\end{array}
\right.
\end{xalignat*} and we conclude $$\gcd \{\det A_{L}^{i,j}|\, 1\leq i,j\leq n+1\}=p\cdot \gcd \{\det A_{L'}^{i,j}|\, 1\leq i,j\leq n\}\;.$$ 
\end{proof}

Now we explore the case of a link $L\subset L(p,q)$ which is not affine. In this case, $L$ is nontrivially linked with the unknot $U$. We observe that the Alexander polynomial of $L\subset L(p,q)$ is related to the Alexander polynomial of the classical link $L'\cup U\subset S^{3}$:

\begin{prop} 
\label{prop0} Let $\Delta (L)$ be the Alexander polynomial of  $L\subset L(p,q)$. Denote by $\Delta _{i}(L'\cup U)$ the $i$-th two-variable polynomial of the classical link $L'\cup U\subset S^{3}$ (the variables corresponding to $L'$ and $U$ respectively). Then $\Delta (L)(t)$ divides $\Delta _{1}(L'\cup U)(t^{p'},t^{qk'})$ and is divisible by $\Delta _{2}(L'\cup U)(t^{p'},t^{qk'})$.  
\end{prop}
\begin{proof}
The presentation 
$$\pi _{1}(L(p,q)\backslash L,*)=\left <x_{1},\ldots ,x_{n},a_{1},\ldots ,a_{k}|\, q_{1},\ldots ,q_{k},r_{1},\ldots ,r_{k},w_{1},\ldots ,w_{n-k},l\right >\;,$$ described in Subsection~\ref{subs3}, is obtained from the presentation for $\pi _{1}(S^{3}\backslash (L'\cup U),*)$ by adding the lens relation, see Theorem~\ref{th2}. Applying $\alpha \circ \gamma $ with respect to all relations except the lens relation means identifying all the variables $x_{i}$ and all the variables $a_{i}$ in the Fox differentials of this presentation. We obtain the matrix $A(x,a)$, given in Proposition~\ref{prop5}, and the Alexander-Fox matrix of $L\subset L(p,q)$ is given by $A_{L}(t)=A(t^{p'},t^{qk'})$. By deleting the last column of $A(x,a)$ (belonging to the lens relation), we obtain the two-variable Alexander-Fox matrix of the link $L'\cup U\subset S^{3}$.  
\begin{xalignat*}{1}
& A_{L}(t)=\bordermatrix{~ & w_{1} & \ldots & w_{n-k} & q_{k} & \ldots & q_{1} & r_{k} & \ldots & r_{1} & l \cr
x_{n} & \quad & \quad & \quad & \quad & \quad & \quad & \quad & \quad &\quad & 0 \cr
\vdots & \quad & \quad & \quad & \quad & \quad & \quad & \quad & \quad & \quad & \vdots \cr
x_{k+1} & \quad & \quad & \quad & \quad & \quad & \quad & \quad & \quad & \quad & 0 \cr
x_{k} & \quad & \quad & \quad & \quad & \quad & \quad & \quad & \quad & \quad & \beta _{k} \cr
\vdots & \quad & \quad & \quad & \quad & A_{L'\cup U}(t^{p'},t^{qk'}) & \quad & \quad & \quad & \quad & \vdots \cr
x_{1} & \quad & \quad & \quad & \quad & \quad & \quad & \quad & \quad & \quad & \beta _{1}\cr
a_{1} & \quad & \quad & \quad & \quad & \quad & \quad & \quad & \quad & \quad & \theta \cr
\vdots & \quad & \quad & \quad & \quad & \quad & \quad &  \quad & \quad & \quad & 0 \cr
a_{k} & \quad & \quad & \quad & \quad & \quad & \quad & \quad & \quad & \quad & 0 \cr } 
\end{xalignat*}
where $\theta =\frac{\partial l}{\partial a_{1}}(t^{p'},t^{qk'})$ and $\beta _{i}=\frac{\partial l}{\partial x_{i}}(t^{p'},t^{qk'})$ for $i=1,\ldots ,k$. Denote by $A^{i,(j,j')}$  the matrix we get from $A$ by deleting the $i$-th row, the $j$-th and the $j'$-th column, and denote by $B=A_{L'\cup U}(t^{p'},t^{qk'})$. The Alexander polynomial of $L$ may be written as 
\begin{xalignat*}{1}
& \Delta (L)=\gcd \{\det A_{L}^{i,(j,j')}|\, 1\leq i\leq n+k,\, 1\leq j<j'\leq n+k+1\}\\
& =\gcd \{\det B^{i,j},\theta \det B^{(i,n+1),(j,j')}+\sum _{r=1}^{k}(-1)^{r}\beta _{r}\det B^{(i,n-r+1),(j,j')}|1\leq i,j,j'\leq n+k\}\;.
\end{xalignat*}
Since $\Delta _{1}(L'\cup U)(t^{p'},t^{qk'})=\gcd \{\det B^{i,j}  \mid 1\leq i,j\leq n+k\}$ and $\Delta _{2}(L'\cup U)(t^{p'},t^{qk'})=\gcd \{\det B^{(i,i'),(j,j')} \mid 1\leq i,i',j,j'\leq n+k\}$, it follows that $\Delta _{2}(L'\cup U)(t^{p'},t^{qk'})$ divides $\Delta (L)$ and $\Delta (L)$ divides $\Delta _{1}(L'\cup U)(t^{p'},t^{qk'})$. 
\end{proof}

As we have seen, the group of a link $L\subset L(p,q)$ is in a close relationship with the group of the classical link $L'\cup U\subset S^{3}$. Based on this relationship, Proposition~\ref{prop0} approximates the Alexander polynomial $\Delta (L)$ with the link polynomials of $L'\cup U$. Let us describe the relation between the Alexander polynomials $\Delta (L)$ and $\Delta (L'\cup U)$ more precisely.  

\begin{defn} Let $p$ and $q$ be positive coprime integers. Denote by $\lambda _{p,q}$ the rational function, given by $$\lambda _{p,q}(u)=\frac{u^{pq}-1}{(u^{p}-1)(u^{q}-1)}\;.$$ 
\end{defn}

\begin{lemma}
\label{lemma5} For any positive coprime integers $p$ and $q$ we have $\lambda _{p,q}(u)=\frac{\lambda _{1}(p,q)(u)}{u-1}$, where $\lambda _{1}(p,q)$ is a polynomial not divisible by $(u-1)$.
\end{lemma}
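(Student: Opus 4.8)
The plan is to clear the denominator one factor at a time, reducing the claim to a divisibility between two explicit polynomials. First I would invoke the geometric series identity $\frac{u^{pq}-1}{u^q-1}=\sum_{j=0}^{p-1}u^{jq}$, which shows that $P(u):=\sum_{j=0}^{p-1}u^{jq}$ is a genuine polynomial and that $\lambda_{p,q}(u)=\frac{P(u)}{u^p-1}$. Writing $u^p-1=(u-1)Q(u)$ with $Q(u)=\sum_{i=0}^{p-1}u^i$, the assertion becomes the following: $Q(u)$ divides $P(u)$ in $\ZZ[u]$, and the quotient does not vanish at $u=1$. Setting $\lambda_1(p,q):=P/Q$ then yields $\lambda_{p,q}=\frac{\lambda_1(p,q)}{u-1}$, as required, since $\lambda_{p,q}=\frac{P}{(u-1)Q}=\frac{\lambda_1(p,q)}{u-1}$.

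To establish the divisibility $Q\mid P$ I would argue on roots. The roots of $Q$ are precisely the $p$-th roots of unity different from $1$, each of them simple. If $\zeta$ is such a root, then $\zeta^{pq}=(\zeta^p)^q=1$, while $\zeta^q\neq 1$: otherwise the order of $\zeta$ would divide both $p$ and $q$, hence divide $\gcd(p,q)=1$, forcing $\zeta=1$. Consequently $P(\zeta)=\frac{\zeta^{pq}-1}{\zeta^q-1}=0$. Since $Q$ is monic with $\deg Q=p-1$ distinct roots, all of which are roots of $P$, polynomial division of $P$ by $Q$ leaves a remainder of degree $<p-1$ that vanishes at these $p-1$ points; hence the remainder is $0$ and $Q\mid P$ in $\ZZ[u]$.

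It then remains to check non-vanishing at $u=1$. Evaluating directly gives $P(1)=p=Q(1)$, so $\lambda_1(p,q)(1)=P(1)/Q(1)=1\neq 0$, which shows that $(u-1)$ does not divide $\lambda_1(p,q)$. The single delicate point in the whole argument is the step ruling out $\zeta^q=1$ for a nontrivial $p$-th root of unity $\zeta$; this is exactly where the coprimality of $p$ and $q$ is used, and everything else is formal manipulation of geometric series together with a degree count. Equivalently, one could run the same computation through the cyclotomic factorisation $u^n-1=\prod_{d\mid n}\Phi_d(u)$, where coprimality guarantees that $\Phi_1=u-1$ is the only cyclotomic factor common to $u^p-1$ and $u^q-1$, so that it alone survives to first order in the denominator; I would nonetheless keep the elementary version as the main line of proof.
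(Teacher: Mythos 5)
Your argument is correct and takes essentially the same route as the paper's one-line proof, which observes that by coprimality every root of the denominator is a root of the numerator, with $u=1$ a double root of the denominator but only a simple root of the numerator. You simply make the root-multiplicity bookkeeping explicit by first cancelling $u^{q}-1$ via the geometric series and then verifying that $Q(u)=\sum_{i=0}^{p-1}u^{i}$ divides $P(u)=\sum_{j=0}^{p-1}u^{jq}$ with $\lambda_{1}(p,q)(1)=1\neq 0$.
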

\begin{proof} 
Since $\gcd(p,q)=1$, every root of the denominator is also a root of the numerator. The root $1$, however, is a double root of the denominator and a single root of the numerator.
%In the rational function $\lambda _{p,q}$, every root of the denominator $(u^{p}-1)(u^{q}-1)$ is either the $p$-th or the $q$-th root of unity and thus is also a root of the numerator $(u^{pq}-1)$. Since $p$ and $q$ are coprime, only 1 is at the same time the $p$-th and the $q$-th root of unity, thus the term $u-1$ in the factorization appears twice in the denominator and once in the numerator. It follows that after cancelling the factors, only $u-1$ remains in the denominator. 
\end{proof}

We need another classical result, that was obtained in \cite[Theorem, page 61]{torres}.
\begin{lemma}\cite{torres}\label{lemmaT} Let $L$ be a $\mu $-component link in $S^{3}$, where $\mu \geq 2$. Let $A_{L}(t_{1},\ldots ,t_{\mu })$ be an Alexander matrix for $L$, where $t_{i}$ denotes the variable, corresponding to component $L_i$ for $i=1,\ldots ,\mu $. Denote by $A_{L}^{j}$ the minor of $A_{L}$, obtained by deleting the row/column belonging to the generator $x_{j}\in \pi _{1}(S^{3}\backslash L)$. Then there exists a polynomial $\Delta \in \ZZ [t_{1}^{\pm 1},\ldots ,t_{\mu }^{\pm 1}]$ such that  $$A_{L}^{j}=\pm (x_{j}-1)\Delta $$ for $j=1,\ldots ,\mu $. 
\end{lemma}

\begin{thm} 
\label{th4} Let $L$ be a link in $L(p,q)$, which intersects the disk $D$ in $k$ transverse intersection points so that $\overline{k}=\sum _{i=1}^{k}\epsilon _{i}\neq 0$. Let $p'=\frac{p}{d}$ and $k'=\frac{\overline{k}}{d}$ where $d=\gcd \{p,\overline{k}\}$. Then the Alexander polynomial of $L$ and the two-variable Alexander polynomial of the classical link $L'\cup U$ are related by 
%$$\Delta (L)(t)=\frac{\Delta (L'\cup U)(t^{p'},t^{qk'})}{t^{k'}-1}\;.$$
$$\Delta (L)(t)=\frac{t-1}{t^{k'}-1}\, \Delta (L'\cup U)(t^{p'},t^{qk'})\;.$$

\end{thm}
\begin{proof} Represent $L$ by a diagram, described in Notation~\ref{not}. Then the group of $L$ has the presentation $$\pi _{1}(L(p,q)\backslash L,*)=\left <x_{1},\ldots ,x_{n},a_{1},\ldots ,a_{k}|\, q_{1},\ldots ,q_{k},r_{1},\ldots ,r_{k},w_{1},\ldots ,w_{n-k},l\right >\;,$$ described in Subsection~\ref{subs3}. By calculating the Fox differentials of this presentation and then identifying all the $x_{i}$-variables and all the $a_{j}$-variables, we obtain the matrix $A(x,a)$. In Proposition~\ref{prop5} we have shown that the Alexander-Fox matrix of $L$ is calculated from $A(x,a)$ by the substitution $A_{L}(t)=A(t^{p'},t^{qk'})$. 

For $i=1,\ldots ,k$, we denote: $\theta =1+a+\ldots +a^{p-1}$, 
\begin{xalignat*}{1}
\beta _{i}=-(1+x^{\overline{k}}+\ldots +x^{(q-1)\overline{k}})x^{\sum _{j=1}^{i-1}\epsilon _{j}}\phi _{i}
\end{xalignat*} and
\begin{xalignat*}{1}
\phi _{i}=\left \{ 
\begin{array}{lr}
1 & \textrm{ if }\epsilon _{i}=1,\\
-x^{-1} & \textrm{ if }\epsilon _{i}=-1\;.
\end{array}
\right.
\end{xalignat*} and then calculate the matrix $A(x,a)=$
\begin{xalignat}{1}
\label{Axa1}
& \bordermatrix{~ & w_{1} & \ldots & w_{n-k} & q_{k} & \ldots & q_{1} & r_{k} & \ldots & r_{1} & l \cr
x_{n} & \quad & \quad & \quad & \quad & \quad & \quad & \quad & \quad & \quad & \quad \cr
\vdots & \quad & B_{1} & \quad & \quad & 0 & \quad & \quad & 0 & \quad & 0 \cr
x_{2k+1} & \quad & \quad & \quad & \quad & \quad & \quad & \quad & \quad & \quad & \quad \cr
x_{2k} & \quad & \quad & \quad & a\phi _{k} & \quad & \quad & \quad & \quad & \quad & \quad \cr 
\vdots & \quad & B_{3} & \quad & \quad & \ddots & \quad & \quad & 0 & \quad & 0 \cr
x_{k+1} & \quad & \quad & \quad & \quad & \quad & a\phi _{1} & \quad & \quad & \quad & \quad \cr
x_{k} & \quad & \quad & \quad & -\phi _{k} & \quad & \quad & (1-a)\phi _{k} & \quad & \quad & \beta _{k} \cr
\vdots & \quad & B_{2} & \quad & \quad & \ddots & \quad & \quad & \ddots & \quad & \vdots \cr
x_{1} & \quad & \quad & \quad & \quad & \quad & -\phi _{1} & \quad & \quad & (1-a)\phi _{1} & \beta _{1} \cr
a_{1} & \quad & \quad & \quad & (1-x)\phi _{k} & \ldots & (1-x)\phi _{1} & x^{\epsilon _{k}} & \quad & -1 & \theta \cr
\vdots & \quad & 0 & \quad & \quad & 0 & \quad & -1 & \ddots & \quad & 0 \cr
a_{k} & \quad & \quad & \quad & \quad & \quad & \quad & \quad & -1 & x^{\epsilon _{1}} & \quad \cr } 
\end{xalignat}
If we erase the last column of $A$, corresponding to the lens relation, we obtain the (two variable) Alexander matrix of the classical link $L'\cup U$. Since $A$ has $n+k$ rows and $n+k+1$ columns, its columns are linearly dependent. Therefore, the last column may be expressed as a linear combination of the previous columns. The last column may be written as a linear combination of the columns $r_{1},\ldots ,r_{k}$ as follows: 
\begin{xalignat}{1}
\label{lamb}
l=\lambda _{p,q}(t^{k'})\left ((x^{\sum _{i=1}^{k-1}\epsilon _{i}})r_{k}+(x^{\sum _{i=1}^{k-2}\epsilon _{i}})r_{k-1}+\ldots +(x^{\epsilon _{1}})r_{2}+r_{1}\right )\;.
\end{xalignat} We may use this linear combination when calculating the determinants of the minors of $A_{L}$. The Alexander polynomial of $L$ is given by $$\Delta (L)=\gcd \{\det A_{L}^{i,(j,j')}|\, 1\leq i\leq n+k,1\leq j<j'\leq n+k+1\}\;.$$
We observe the minors $A_{L}^{i,(j,j')}$ and compare them with the minors of $A_{L'\cup U}$. Denote $B(t)=A_{L'\cup U}(t^{p'},t^{qk'})$.  For $j'=n+k+1$ we have $\det A_{L}^{i,(j,n+k+1)}=\det B^{i,j}$. 

The minor $A_{L}^{i,(j,j')}$ for $j'\leq n+k$ is obtained from the minor $B^{i,j}$ by changing the $j'$-th column to $l$ and then subtracting from $l$ all possible terms in the linear combination \eqref{lamb} (we may subtract the terms whose corresponding columns are different from $j$ and $j'$). 

For $1\leq j<j'\leq n$ we have $\det A_{L}^{i,(j,j')}=0$, since the last $(k+1)$ columns of this minor are linearly dependent. 

For $j\leq n$ and $j'=n+r$ we have $\det A_{L}^{i,(j,n+r)}=\lambda _{p,q}(t^{k'})x^{\sum _{i=1}^{k-r}\epsilon _{i}}\det B^{i,j}$ if $1\leq r\leq k$. 

For the remaining case when $n+1\leq j<j'\leq n+r$, we argue as follows. If $i\leq n$, then the rows $a_{2},\ldots ,a_{k}$ are linearly dependent since each of them has at most $(k-2)$ nontrivial entries. If, on the other hand, $i>n$, then we have $$\det A_{L}^{i,(j,j')}=\lambda _{p,q}(t^{k'})\left (x^{\sum _{i=1}^{k-(j-n)}\epsilon _{i}}\det B^{i,j'}+x^{\sum _{i=1}^{k-(j'-n)}\epsilon _{i}}\det B^{i,j}\right )\;.$$ Thus, we may write 
\begin{xalignat*}{1}
& \det A_{L}^{i,(j,j')}=\\
& \left \{ 
\begin{array}{lr}
0, & \textrm{ $1\leq j<j'\leq n$,}\\
\det B^{i,j}, & \textrm{ $j'=n+k+1$,}\\
\lambda _{p,q}(t^{k'})x^{\sum _{i=1}^{k-r}\epsilon _{i}}\det B^{i,j}, & \textrm{ $j\leq n$, $j'=n+r$, $1\leq r\leq k$,}\\
0, & \textrm{ $i\leq n$, $n+1\leq j<j'\leq n+k$,}\\
\lambda _{p,q}(t^{k'})\left (x^{\sum _{i=1}^{k-(j-n)}\epsilon _{i}}\det B^{i,j'}+x^{\sum _{i=1}^{k-(j'-n)}\epsilon _{i}}\det B^{i,j}\right ), & \textrm{ $i>n$, $n+1\leq j<j'\leq n+k$.}
\end{array} 
\right.
\end{xalignat*}
%Since the Alexander polynomial is only defined up to the multiplication by a power of $t$, it follows by Lemma~\ref{lemma5} that 
%\begin{xalignat*}{1}
%& \Delta (L)(t)=\gcd \{\det A_{L}^{i,(j,j')}|\, 1\leq i\leq n+k,1\leq j<j'\leq n+k+1\}\\
%& =\frac{\Delta (L'\cup U)(t^{p'},t^{qk'})}{t^{k'}-1}.
%\end{xalignat*} 

By Lemma \ref{lemmaT} it follows that 
\begin{xalignat*}{1}
& \textrm{gcd}\{\det B^{i,j}|\, 1\leq i,j\leq n+k\}\\
& =\textrm{gcd}\{t^{p'}-1,t^{qk'}-1\}\Delta (L'\cup U)(t^{p'},t^{qk'})=(t-1)\Delta (L'\cup U)(t^{p'},t^{qk'})\;.
\end{xalignat*}
Since the Alexander polynomial is only defined up to the multiplication by a power of $t$, it follows by Lemma~\ref{lemma5} that 
\begin{xalignat*}{1}
& \Delta (L)(t)=\gcd \{\det A_{L}^{i,(j,j')}|\, 1\leq i\leq n+k,1\leq j<j'\leq n+k+1\}\\
& =\frac{t-1}{t^{k'}-1}\, \Delta (L'\cup U)(t^{p'},t^{qk'}).
\end{xalignat*}
\end{proof}

\subsection{Alexander polynomial of  null-homologous links in $L(p,q)$}
We obtain the following corollaries.

\begin{cor} 
\label{cor7} Let $L$ be a link in $L(p,q)$, which intersects the disk $D$ in $k$ transverse intersection points, so that $\overline{k}=\sum _{i=1}^{k}\epsilon _{i}=0$. Then the Alexander polynomial of $L$ and the two-variable Alexander polynomial of the classical link $L'\cup U$ are related by 
%$$\Delta (L)(t)=\frac{\Delta (L'\cup U)(t,t^{q})}{t-1}\;.$$
$$\Delta (L)(t)=\Delta (L'\cup U)(t,t^{q})\;.$$
\end{cor}
\begin{proof} Represent $L$ by a diagram, described in Notation~\ref{not}. By taking the band connected sum with the $(p,-q)$-curve (recall Lemma \ref{sl-lemma}), we may change $L$ to an equivalent link whose algebraic intersection with $D$ equals $\overline{k}=\sum _{i=1}^{k}\epsilon _{i}=p$. Then we have $\gcd \{p,\overline{k}\}=p$, $p'=\frac{p}{d}=1$ and $k'=\frac{\overline{k}}{d}=1$. We proceed as in the proof of Theorem \ref{th4}. 
\end{proof}

\begin{cor} 
\label{cor5} Let $L\subset L(p,q)$ be a link, which intersects the disk $D$ in $k$ transverse intersection points, so that $\overline{k}=0$. Then the $\mu $-twisted Alexander polynomial of $L$ and the two-variable Alexander polynomial of the classical link $L'\cup U$ are related by $$\Delta ^{\mu }(L)(t)=\frac{\Delta (L'\cup U)(t,\mu )}{\mu -1}\;,$$ where $\mu \in \CC $ is any $p$-th complex root of unity, different from 1.   
\end{cor}
\begin{proof} We use the same method as in the proof of Theorem~\ref{th4}. In this case, $d=\gcd \{p,\overline{k}\}=p$ and therefore $p'=1$ and $k'=0$. The $\mu $-twisted Alexander-Fox matrix of $L$ is calculated from $A(x,a)$ by the substitution $A^{\mu }_{L}(t)=A(t,\mu )$ (see Proposition~\ref{prop5}). For $i=1,\ldots ,k$ we calculate: $\theta =1+a+\ldots +a^{p-1}=\frac{\mu ^{p}-1}{\mu -1}=0$ and
\begin{xalignat*}{1}
\beta _{i}=-qx^{\sum _{j=1}^{i-1}\epsilon _{j}}\phi _{i}\;.
\end{xalignat*} The matrix $A(x,a)$ looks like \eqref{Axa1}. If we erase the last column of $A(x,a)$, corresponding to the lens relation, we obtain the (two variable) Alexander matrix of the classical link $L'\cup U$. The last column may be written as a linear combination of the columns $r_{1},\ldots ,r_{k}$ as follows: 
\begin{xalignat}{1}
\label{lambda}
l=\frac{q}{1-\mu }\left ((x^{\sum _{i=1}^{k-1}\epsilon _{i}})r_{k}+(x^{\sum _{i=1}^{k-2}\epsilon _{i}})r_{k-1}+\ldots +(x^{\epsilon _{1}})r_{2}+r_{1}\right )\;.
\end{xalignat}
Denote $B(t)=A_{L'\cup U}(t,\mu )$. Using an analogous reasoning as in the proof of Theorem~\ref{th4}, we calculate
\begin{xalignat*}{1}
& \det A_{L}^{i,(j,j')}=\\
& \left \{ 
\begin{array}{lr}
0, & \textrm{ $1\leq j<j'\leq n$,}\\
\det B^{i,j}, & \textrm{ $j'=n+k+1$,}\\
\frac{q}{1-\mu }x^{\sum _{i=1}^{k-r}\epsilon _{i}}\det B^{i,j}, & \textrm{ $j\leq n$, $j'=n+r$, $1\leq r\leq k$,}\\
0, & \textrm{ $i\leq n$, $n+1\leq j<j'\leq n+k$.}\\
\frac{q}{1-\mu }\left (x^{\sum _{i=1}^{k-(j-n)}\epsilon _{i}}\det B^{i,j'}+x^{\sum _{i=1}^{k-(j'-n)}\epsilon _{i}}\det B^{i,j}\right ), & \textrm{ $i>n$, $n+1\leq j<j'\leq n+k$.}
\end{array} 
\right.
\end{xalignat*}
Since the Alexander polynomial is only defined up to multiplication by a power of $t$, it follows that 
\begin{xalignat*}{1}
& \Delta ^{\mu }(L)(t)=\frac{\Delta (L'\cup U)(t,\mu )}{\mu -1}\;.
\end{xalignat*}
\end{proof}

\begin{cor} Denote by $-L$ the link $L\subset L(p,q)$ with the opposite orientation. Then the Alexander polynomials of $L$ and $-L$ are connected by $$\Delta (-L)(t)=\Delta (L)(t^{-1})\;.$$
\end{cor}
\begin{proof} By Theorem~\ref{th4}, the Alexander polynomial of $L$ is given by $\Delta (L)(t)=\frac{t-1}{t^{k'}-1}\Delta _{L'\cup U}(t^{p'},t^{qk'})$. When changing the orientation of $L$, every sign $\epsilon _{i}$ switches to $-\epsilon _{i}$, thus $\overline{k}=\sum _{i=1}^{k}\epsilon _{i}$ becomes $-\overline{k}$ and consequently $k'$ becomes $-k'$. The classical link $L'$, corresponding to $L$, also changes orientation, and for the classical links we know that $\Delta (-L')(t)=\Delta (L')(t^{-1})$. Therefore, by Theorem~\ref{th4} we have $$\Delta (-L)(t)=\frac{t-1}{t^{-k'}-1}\, \Delta (-L'\cup U)(t^{p'},t^{-qk'})=\frac{t-1}{t^{-k'}-1}\Delta (L'\cup U)(t^{-p'},t^{-qk'})=\Delta (L)(t^{-1})\;.$$
\end{proof}

\subsection{Alexander polynomial of a family of unlinks in $L(p,q)$}

Now we observe a family of unlinks $\{L_{k,r}\}_{k\in \mathbb{N},0\leq r\leq k}$ in $L(p,q)$. Any link $L\subset L(p,q)$ may be obtained by combining ("multiply" connect summing) the classical link $L'\subset S^{3}$ with a suitable link $L_{k,r}$. We calculate the Alexander polynomial $\Delta (L_{k,r})$ for any $k\in \mathbb{N},0\leq r\leq k$. In the following proposition, we begin with the subfamily $\{L_{k}\}_{k\in \mathbb{N}}$, where $L_{k}=L_{k,0}$.     

\begin{prop}
\label{prop1} Let $k$ be a positive integer. Denote by $L_{k}\subset L(p,q)$ the unlink, consisting of $k$ unknots, each of them intersecting the disk $D$ transversely in a single positive point of intersection. The Alexander polynomial of $L_{k}$ is given by 
\begin{xalignat*}{1}
& \Delta (L_{k})(t)=\frac{(t^{qk'}-1)^{k-1}(t-1)}{t^{k'}-1}\;.
\end{xalignat*}
\end{prop}
\begin{proof} We will calculate the Alexander polynomial of the classical link $L_{k}'\cup U\subset S^{3}$ and then use Theorem~\ref{th4}. Denote by $x_{i}$ the generator of $\pi _{1}(S^{3}\backslash (L_{k}'\cup U),*)$, corresponding to the $i$-th unknot of the unlink $L_ {k}$ for $i=1,\ldots ,k$. Denote by $a_{1},\ldots ,a_{k}$ the generators, corresponding to the meridian $U$, so that the Wirtinger relations of the crossings between the meridian and the unknots are $q_{i}\colon a_{1}x_{i}a_{1}^{-1}x_{i}^{-1}$ and $r_{i}\colon x_{i}a_{k-i+1}x_{i}^{-1}a_{(k-i+2)\textrm{ mod}k}$ for $i=1,\ldots ,k$. From the link group presentation $$\pi _{1}(S^{3}\backslash (L_{k}'\cup U),*)=\left <x_{1},\ldots ,x_{k},a_{1},\ldots ,a_{k}|\, q_{1},\ldots ,q_{k},r_{1},\ldots ,r_{k}\right >$$ we derive the Alexander matrix $A_{L_{k}'\cup U}$. For $k\geq 2$, we have 
\begin{xalignat*}{1}
& A_{L_{k}'\cup U}(x,a)=\bordermatrix{~ & q_{k} & \ldots & q_{1} & r_{k} & \ldots & r_{1}\cr
x_{k} & (a-1) & \quad & \quad & (1-a) & \quad & \quad \cr
\vdots & \quad & \ddots & \quad & \quad & \ddots & \quad \cr
x_{1} & \quad & \quad & (a-1) & \quad & \quad & (1-a) \cr
a_{1} & (1-x) & \ldots & (1-x) & x & \quad & -1 \cr 
\vdots & \quad & 0 & \quad & -1 & \ddots & \quad \cr
a_{k} & \quad & \quad & \quad & \quad & -1 & x \cr } 
\end{xalignat*} Observe that this matrix is obtained from the matrix $A(x,a)$ in \eqref{Axa} if $n=k$, by adding the row $x_{k+i}$ to $x_{i}$ for $i=1,\ldots ,k$ and then deleting the rows $x_{k+1},\ldots ,x_{2k}$. 
 
The Alexander polynomial is then calculated as $$\Delta (L_{k}'\cup U)(x,a)=\gcd \{\det A_{L_{k}'\cup U}(x,a)^{i,j}|\, 1\leq i,j\leq 2k\}\;.$$ where $A^{i,j}$ denotes the minor, obtained by deleting the $i$-th row and the $j$-th column of a matrix $A$. Denote by $S(i,j)$ the set of all bijections $$\pi \colon \{1,\ldots ,\widehat{i},\ldots ,2k\}\to \{1,\ldots ,\widehat{j},\ldots ,2k\}\;,$$ and let $m_{i,j}$ be the $(i,j)$-th entry of the matrix $A_{L_{k}'\cup U}$. Then we have $$\det A_{L_{k}'\cup U}^{i,j}=\sum _{\pi \in S(i,j)}m_{1,\pi (1)}m_{2,\pi (2)}\ldots m_{2k,\pi (2k)}\;.$$ From the matrix, we may observe that for every $\pi \in S(i,j)$, the following holds. 
\begin{xalignat*}{1}
& m_{1,\pi (1)}\ldots m_{k,\pi (k)}=\left \{
\begin{array}{lr}
0 & \textrm{ or }\\
\pm (a-1)^{k-1} & \textrm{ if }i\leq k,\\
\pm (a-1)^{k} & \textrm{ if }i\geq k+1\;.
\end{array}
\right.
\end{xalignat*}

\begin{xalignat*}{1}
& m_{k+1,\pi (k+1)}\ldots m_{2k,\pi (2k)}=\left \{
\begin{array}{lr}
0 & \textrm{ or }\\
(1-x)(\ldots ) \textrm{ or }x^{k}\textrm{ or }-1 & \textrm{ if }i\leq k\\
\textrm{ something } & \textrm{ if }i\geq k+1
\end{array}
\right.
\end{xalignat*}

If $i\geq k+1$, then $\det A_{L_{k}'\cup U}^{i,j}$ is divisible by $(a-1)^{k}$. If $i\leq k$ and $j\geq k+1$, then $\det A_{L_{k}'\cup U}^{i,j}=(a-1)^{k-1}(1-x)(-1)^{j-k-1}x^{2k-j}$. If $i,j\leq k$, then we may expand the determinant along the $(k+1)$-st row to obtain 
\begin{xalignat*}{1}
& \det A_{L_{k}'\cup U}^{i,j}=(a-1)^{k-1}\left (\pm (x^{k}-1)+(1-x)\sum _{r=1}^{k}C_{r}\right )\;,
\end{xalignat*} where the factors $C_{r}$ denote sums of the terms $m_{k+2,\pi (k+2)}\ldots m_{2k,\pi (2k)}$. 

It follows that the determinant $\det A_{L_{k}'\cup U}^{i,j}$ for $1\leq i,j\leq 2k$ is always divisible by $(a-1)^{k-1}\gcd \left \{a-1,x-1\right \}$. Since $\det A_{L_{k}'\cup U}^{1,2k}(x,a)=(a-1)^{k-1}(1-x)(-1)^{k-1}$ and $\det A_{L_{k}'\cup U}^{k+1,2k}(x,a)=(a-1)^{k}(-1)^{k-1}$, we conclude that $$\Delta (L_{k}'\cup U)(x,a)=(a-1)^{k-1}\gcd \left \{a-1,x-1\right \}\;.$$ By Theorem~\ref{th4} it follows 
\begin{xalignat*}{1}
& \Delta (L_{k})(t)=\frac{(t^{qk'}-1)^{k-1}}{t^{k'}-1}\gcd \left \{t^{qk'}-1,t^{p'}-1\right \}=\frac{(t^{qk'}-1)^{k-1}(t-1)}{t^{k'}-1}\;.
\end{xalignat*} Here we have used the fact that the numbers $p'$ and $qk'$ are coprime, and thus 1 is the only common root of the polynomials $t^{qk'}-1$ and $t^{p'}-1$.  

 For $k=1$ we have 
\begin{xalignat*}{1}
A_{L_{1}'\cup U}(x,a)=\bordermatrix{~ & q_{1} & r_{1} \cr
x_{1} & a-1 & 1-a \cr
a_{1} & 1-x & x-1 \cr }
\end{xalignat*} and $\Delta (L_{1}'\cup U)(t^{p'},t^{q})=\gcd \{t^{q}-1,t^{p'}-1\}=t-1$ and it follows from Theorem~\ref{th4} that $\Delta (L_{1})=1$. 
\end{proof}

\subsection{Relation between the Alexander polynomials of a link in $L(p,q)$ and its corresponding link in $S^{3}$}

\begin{thm} 
\label{th}
Let $L\subset L(p,q)$ be a link which intersects the disk $D$ transversely in a single point of intersection. Denote by $L'\subset S^{3}$ the link with the same diagram as $L$, which we get by ignoring the surgery along the unknot $U$. Then the Alexander polynomials of $L$ and $L'$ are related by $$\Delta (L)(t)=\Delta (L')(t^{p})\;.$$
\end{thm}
\begin{proof} In this case, $k=1$ and $H_{1}(L(p,q)\backslash L)$ contains no torsion. Take the strand of $L$ which intersects the disk $D$, and cut it on each side of $D$ to represent $L$ as the connected sum $L'\#L_{1}$ ($L_{1}$ is the unlink with one component, defined in Proposition~\ref{prop1}). We use the result~\cite[Proposition 8]{CMM}, which says the Alexander polynomial of a connected sum of two links in $L(p,q)$, one of them a local link, equals the product of both Alexander polynomials. Let
$\pi _{1}(S^{3}\backslash L',*)=\left <y_{1},\ldots ,y_{n}|\, w_{1},\ldots ,w_{n}\right >$  be the Wirtinger presentation of the group of $L'$ and  
$\pi _{1}(L(p,q)\backslash L_{1},*)=\left <x_{1},a_{1}|\, q_{1},r_{1},l\right >$  be the presentation of the group of $L_{1}$ as defined in the proof of Proposition~\ref{prop1}. Then $$\pi _{1}(L(p,q)\backslash L,*)=\left <y_{1},\ldots y_{n},x_{1},a_{1}|\, w_{1},\ldots ,w_{n},q_{1},r_{1},l,x_{1}=y_{1}\right >$$ is the presentation of the group of $L$. The Alexander matrix of $L$ looks like 
\begin{xalignat*}{1}
A_{L}(t)=\begin{pmatrix} 
\quad & \quad & \quad & \quad & \quad & \quad & -1 \cr
\quad & A_{L'}(t^{p}) & \quad & \quad & 0 & \quad & 0 \cr
\quad & \quad & \quad & \quad & \quad & \quad & \vdots \cr
\quad & \quad & \quad & \quad & \quad & \quad & 0 \cr  
\quad & 0 & \quad & \quad & A_{L_{1}}(t) & \quad & 1 \cr
\quad & \quad & \quad & \quad & \quad & \quad & 0 \cr 
\end{pmatrix}\;,
\end{xalignat*}
where $A_{L'}(t)$ and $A_{L_{1}}(t)$ are the Alexander matrices of $L'$ and $L_{1}$ respectively. In the Alexander matrix for $L$, all the generators, corresponding to the overpasses of $L$ (these are $y_{1},\ldots ,y_{n}$ and $x_{1}$) are identified and sent to $t^{p}$, while the generator $a_{1}$ is sent to $t^{q}$ (see Proposition~\ref{prop5}). The Alexander polynomial of $L$ is calculated by 
$$\Delta (L)(t)=\gcd \{  \det A_{L}(t)^{i,(j,j',j'')}|\, 1\leq i\leq n+2, 1\leq j<j'<j''\leq n+4\}\;.$$ For the indices $1\leq i\leq n+2$ and $1\leq j<j'<j''\leq n+4$ we have
\begin{xalignat*}{1}
& \det A_{L}(t)^{i,(j,j',j'')}=\\
& \left \{
\begin{array}{lr}
\det A_{L'}(t^{p})^{i,j}\cdot \det A_{L_{1}}(t)^{1,(j'-n,j''-n)} & \textrm{ if $i\leq n$, $j\leq n$ and $n+1\leq j',j''\leq n+3$},\\
-\det A_{L'}(t^{p})^{1,j}\cdot \det A_{L_{1}}(t)^{i,(j'-n,j''-n)} & \textrm{ if $i\geq n+1$, $j\leq n$ and $n+1\leq j',j''\leq n+3$},\\
0 & \textrm{ otherwise. }
\end{array}
\right.
\end{xalignat*}
Denoting by $d_{i}(A)$ the greatest common divisor of all $i$-minors of a matrix $A$, we have $d_{n+1}(A_{L}(t))=d_{n-1}(A_{L'}(t^{p}))\cdot d_{1}(A_{L_{1}}(t))$ and consequently $\Delta (L)(t)=\Delta (L')(t^{p})\cdot \Delta (L_{1})(t)$. We have shown in Proposition~\ref{prop1} that $\Delta (L_{1})=1$ and thus $\Delta (L)(t)=\Delta (L')(t^{p})$. 
\end{proof}

\begin{cor} Let $L\subset L(p,q)$ be a link, intersecting the disk $D$ transversely in a single point of intersection. Then for the corresponding link $L'\subset S^{3}$ we have $\Delta (L'\cup U)(t^{p},t^{q})=\Delta (L')(t^{p})$.
\end{cor}
\begin{proof} In this case $k=1$. By Theorem~\ref{th} we have $\Delta (L)(t)=\Delta (L')(t^{p})$, and by Theorem~\ref{th4} we conclude $\Delta (L'\cup U)(t^{p},t^{q})=\Delta (L')(t^{p})$. 
\end{proof}

\end{section}

%\begin{section}{Examples and applications}\label{sec-ex}

%Here we first calculate some examples of the Alexander polynomial of knots in lens spaces and continue with an application by providing a skein relation for the Alexander polynomial.

\subsection{Examples}

\begin{ex} \label{example1}
Denote by $K_{i}\subset L(p,q)$ the trefoil knots in Figure~\ref{fig-tref}. Note that $K_i$  intersects the disk $D$ in $i$ transverse intersection points for $i=0,1,2$. The corresponding classical trefoil knot $K'\subset S^{3}$ has $\Delta (K')(t)=t^{2}-t+1$ and $\Delta _{2}(K')(t)=1$. By Corollary~\ref{cor0}, the (twisted) Alexander polynomial of $K_{0}$ equals $$\Delta ^{1}(K_{0})(t)=p(t^{2}-t+1)\;.$$ By Theorem~\ref{th}, the Alexander polynomial of $K_{1}$ equals $\Delta (K_{1})(t)=t^{2p}-t^{p}+1$.  
For $K_{2}\in L(p,q)$, we calculate $\Delta (K_{2}'\cup U)(x,a)=(x^{3}+a)\gcd \{x-1,a-1\}$. It follows from Theorem~\ref{th4} that the Alexander polynomial of $K_{2}$ is given by 
\begin{xalignat*}{1}
& \Delta (K_{2})(t)=\left \{
\begin{array}{lr}
\frac{\Delta (K_{2}'\cup U)(t^{p},t^{2q})}{t^{2}-1}=\frac{(t^{3p}+t^{2q})(t-1)}{t^{2}-1}=\frac{t^{2q}(t^{3p-2q}+1)}{t+1} & \textrm{ if $p$ is odd,}\\
\frac{\Delta (K_{2}'\cup U)(t^{p'},t^{q})}{t-1}=\frac{(t^{3p'}+t^{q})(t-1)}{t-1}=t^{q}(t^{\frac{3p-2q}{2}}+1) & \textrm{ if $p$ is even}\;.
\end{array}
\right.
\end{xalignat*} It might be interesting to check whether a similar formula defines the Alexander polynomial of a general torus knot in $L(p,q)$.  
\end{ex}

% Example Tefoil
\begin{figure}[ht]
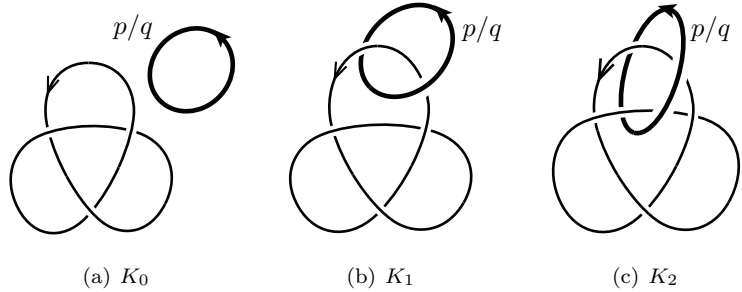

	\centering
	\subfigure[$K_0$]{\begin{overpic}[page=7]{images}
			\put(49,85){$p/q$}
			\label{fig-tref0}
	\end{overpic}}\hspace{0ex}
	\subfigure[$K_1$]{\begin{overpic}[page=8]{images}
				\put(82,84){$p/q$}
				\label{fig-tref1}
	\end{overpic}}
	\subfigure[$K_2$]{\begin{overpic}[page=9]{images}
			\put(68,84){$p/q$}
				\label{fig-tref2}
	\end{overpic}}

			\caption{Diagrams of knots $K_1$, $K_2$, and $K_3$ from Example \ref{example1}.}\label{fig-tref}
\end{figure}

%\begin{ex} 
%Observe the knot $K \subset L(4,1)$ in Figure~\ref{exL41}. By Corollary~\ref{cor3} we have $H_1(L(4,1)\setminus K) = \ZZ \oplus \ZZ_4$. The twisted Alexander polynomials of $K$ are the following:
 % \begin{align}
  %   \Delta^{1}(K)  &= 2t^{-2} -2t^{-1}+4-2t+2t^2,\notag\\
   %  \Delta^{-1}(K) &= -2+2t,\notag\\
    % \Delta^{i}(K)  &= 1,\notag\\
     %\Delta^{-i}(K) &= 1.\notag 
 %\end{align}

%\end{ex}

% Example Tefoil
%\begin{figure}[ht]
%	\centering
%	\begin{overpic}[page=15]{images}
%		\put(93,84){$4/1$}
%	\end{overpic}
%	\caption{The diagram of the knot $K$ in $L(4,1)$.}\label{exL41}
%\end{figure}

\begin{ex} \label{example2}
The Alexander polynomial is indeed a useful invariant that can detect inequivalent knots that are not distinguishable by other common invariants.
Let $K_1$ and $K_2$ be knots in $L(3,1)$ with diagrams from Figure~\ref{ex2}, which are respectively the knots 
$\overline{1_1}$ and $\overline{5_1}$ from the knot table in~\cite{gabr} and respectively the links $L4a1$ and $L10n42$ from the Thistlethwaite link table with $3/1$ rational surgery performed on the trivial component.
The knots are not distinguished by the Kauffman bracket skein module $S_{2,\infty}$, an invariant that generalizes the Kauffman bracket polynomial (see Table 4 in \cite{gabr} and Appendix D in \cite{gabr2}), since it holds that
$$S_{2,\infty}(K_1) = S_{2,\infty}(K_2) = -A^9+A^4 x+A.$$
Both knots are also homologous, $[K_1]=[K_2]\in H_1(L(p,1)) \cong \ZZ_3$. We can easily see that their complements have the same homology groups: $H_1(L(p,1)\setminus K_1) \cong H_1(L(p,1)\setminus K_1) \cong \ZZ$. 
However, the Alexander polynomial detects that they are indeed different knots:

$$\Delta(K_1)(t) = 1$$
$$\Delta(K_2)(t) = t^{-6} -t^{-5} + t^{-2} - 1 + t^2 - t^5 + t^6.$$

\end{ex}

\begin{figure}[ht]
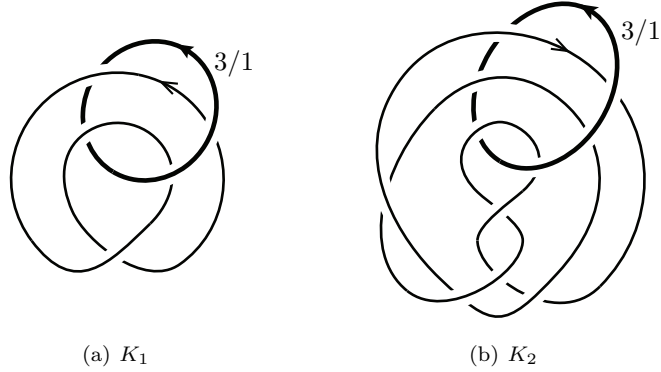

	\centering
	\subfigure[$K_1$]{\begin{overpic}[page=10]{images}
			\put(65,75){$3/1$}
			\label{knot1}
		\end{overpic}}\hspace{7ex}
		\subfigure[$K_2$]{\begin{overpic}[page=11]{images}
				\put(84,84){$3/1$}
				\label{knot2}
			\end{overpic}}
				
				\caption{Two knots in $L(3,1)$ from Example \ref{example2}}\label{ex2}
\end{figure}

\section{A skein relation}
\label{sec:skein}

The relationship between the Alexander polynomial of links in lens spaces and the classical Alexander polynomial obtained in Section \ref{sec-main} may be used to find a skein relation for the normalized version of the Alexander polynomial for links in lens spaces. 

\subsection{The case of lens spaces}

Let $L_+, L_-,$ and $L_0$ be three links in a 3-manifold $M$ that are identical except inside a 3-ball, where they look like those in Figure~\ref{triple}. We refer to $L_+, L_-, L_0$ as a skein triple.

\begin{figure}[ht]
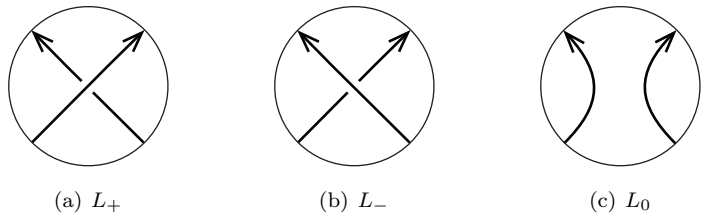

	\centering
	\subfigure[$L_+$]{\includegraphics[page=24]{images}\label{triple1}}\hspace{7ex}
	\subfigure[$L_-$]{\includegraphics[page=25]{images}\label{triple2}}\hspace{7ex}
	\subfigure[$L_0$]{\includegraphics[page=26]{images}\label{triple3}}
				\caption{The links $L_+,L_-,L_0$ are identical except at one crossing.}\label{triple}
\end{figure}

Conway showed that the classical Alexander polynomial, can be uniquely normalized by taking
$$\nabla(L)(t) = \pm t^{\frac{n}{2}} \Delta(L)(t),$$
for some $n \in \ZZ$, see~\cite{kauff} for more details. The polynomial $\nabla(L)(t)$ is called the normalized Alexander polynomial or the Alexander-Conway polynomial and is without indeterminacies in contrast to the Alexander polynomial which is well-defined up to multiplication by the monomial $\pm t^{\frac{n'}{2}}$, $n' \in \ZZ$.

For the skein triple $L_+$, $L_-$, $L_0 \subset S^3$, the following {\textbf{ skein relation}} holds~\cite{kauff}: $$\nabla(L_{+})-\nabla(L_{-})=(t^{\frac{1}{2}}-t^{-\frac{1}{2}})\nabla(L_{0})\;.$$
 
%Take a diagram of an oriented link $L\subset S^{3}$ and fix a crossing in the diagram. Denote $L$ by $L_{+}$ if the chosen crossing is positive (see Definition \ref{def-pos}), otherwise denote it by $L_{-}$. By switching the overpass and the underpass of the fixed crossing, the link $L_{+}$ becomes $L_{-}$ and vice versa. By smoothing the fixed crossing, $L$ becomes the link, denoted by $L_{0}$. It is known that the Alexander polynomials of the three classical links $L_{+}$, $L_{-}$ and $L_{0}$ are related by the following \textbf{skein relation}: $$\Delta (L_{+})-\Delta (L_{-})=(t^{\frac{1}{2}}-t^{-\frac{1}{2}})\Delta (L_{0})\;.$$

\begin{defn}
Let $L$ be a link in $L(p,q)$ obtained by $-p/q$ surgery on the oriented unknot $U$ and let $L'$ be the link by ignoring surgery on $U$. Let $\overline{k}$ be the algebraic intersection number of $L$ and the disk $D$ bounding $U$.
We define the normalized Alexander polynomial of $L$ to be

$$
\nabla(L)(t) =
\begin{cases}
\frac{t-1}{t^{k'}-1}\, \nabla(L' \cup U)(t^{p'},t^{qk'}),& \text{if } \bar{k} \neq 0\\
\nabla(L' \cup U)(t,t^q),& \text{if } \bar{k} = 0\\
\end{cases}
$$

where $\nabla(L' \cup U)(t^{p'},t^{qk'})$ and $\nabla(L' \cup U)(t,t^q)$ are the (classical) normalized Alexander polynomials in the single variable $t$ and $p'$, $q$, and $k'$ are defined as in Theorem~\ref{th4}.
\end{defn}
Note that by Theorem~\ref{th4} and Corollary~\ref{cor7}, it holds that $\nabla(L)(t)$ is a normalization of $\Delta(L)(t)$.

\begin{lemma} For a skein triple $L_+, L_-, L_0 \subset L(p,q)$, the homology classes of these links are the same, $[L_+]=[L_-]=[L_0]\in H_{1}(L(p,q))$. \label{lemma:homsum}
\end{lemma}
\begin{proof}
By Remark~\ref{rem1}, the homology class is determined by the algebraic intersection number of $L$ with the disk $D$ bounded by the unknot $U$ on which $-p/q$ surgery is performed.
Without loss of generality we may assume the 3-ball inside which the skein triple differs is disjoint from $D$, and by the fact that the skein triple keeps the orientation of the arcs outside the 3-ball intact, the intersection numbers of $D$ and, respecively, $L_+$, $L_-$, and $L_0$, are the same.
\end{proof}

The following theorem describes the skein relation for the normalized Alexander polynomial in $L(p,q)$.

\begin{thm} 
Let $L_+, L_-, L_0 \subset L(p,q)$ be a skein triple and let $p'=\frac{p}{\gcd \{p,[L]\}}$, where $L$ is either $L_+$, $L_-$, or $L_0$. The following skein relation holds for the normalized Alexander polynomial:
 $$\nabla (L_{+})-\nabla(L_{-})=(t^{\frac{p'}{2}}-t^{-\frac{p'}{2}})\nabla (L_{0})\;.$$ \label{thm:skein1}
 
%Let $L\subset L(p,q)$ be a link with $r$ components $L_{1},\ldots ,L_{r}$. Denote by $[L_{i}]\in H_{1}(L(p,q))$ the homology class of the $i$-th component of $L$ for $i=1,\ldots ,r$ and let $p'=\frac{p}{\gcd \{p,\sum _{i=1}^{r}[L_{i}]\}}$. The Alexander polynomials of the three  links $L_{+},L_{-},L_{0}\subset L(p,q)$ are related by the skein relation $$\Delta (L_{+})-\Delta (L_{-})=(t^{\frac{p'}{2}}-t^{-\frac{p'}{2}})\Delta (L_{0})\;.$$ \label{thm:skein1}
\end{thm}

\begin{proof} By Lemma~\ref{lemma:homsum}, $p'$ does not depend on the choice of $L$. 
For $\bar k \neq 0$ we obtain

\begin{xalignat*}{1}
& \nabla (L_{+})(t)-\nabla (L_{-})(t)=\frac{t-1}{t^{k'}-1}\left (\nabla (L_{+}'\cup U)(t^{p'},t^{qk'})-\nabla (L_{-}'\cup U)(t^{p'},t^{qk'})\right )\\
& =\frac{t-1}{t^{k'}-1}\left ((t^{p'})^{\frac{1}{2}}-(t^{p'})^{-\frac{1}{2}}\right )\nabla (L_{0}'\cup U)(t^{p'},t^{qk'})=\left (t^{\frac{p'}{2}}-t^{-\frac{p'}{2}}\right )\nabla (L_{0})(t),
\end{xalignat*}
where the first and third equalities hold by definition and the second equality holds from the fact that 
$L_{+}'\cup U$, $L_{-}'\cup U$, $L_{0}'\cup U$ is also a skein triple in $S^3$.

Similarly, for $\bar{k} = 0$ it holds that $p' = 1$ and by Corollary~\ref{cor7}  we have
\begin{xalignat*}{1}
& \nabla (L_{+})(t)-\nabla (L_{-})(t)=\nabla (L_{+}'\cup U)(t,t^{q})-\nabla (L_{-}'\cup U)(t,t^{q})\\
& =\left (t^{\frac{1}{2}}-t^{-\frac{1}{2}}\right )\nabla (L_{0}'\cup U)(t,t^{q})=\left (t^{\frac{p'}{2}}-t^{-\frac{p'}{2}}\right )\nabla (L_{0})(t),
\end{xalignat*}
by the same arguments as before.

\end{proof}

\subsection{Links in other 3-manifolds}

We have obtained a relationship between the Alexander polynomial of a link in $L(p,q)$ and the Alexander polynomial of its classical counterpart in $S^{3}$. It is possible to use the same approach in a more general setting. Let $M$ be a closed, orientable, connected 3-manifold. By the Lickorish-Wallace theorem \cite{lick,wallace}, $M$ may be represented by integral surgery on a link $U_{1}\cup U_{2}\cup \ldots \cup U_{n}$ in $S^{3}$, where $U_{i}$ is an unknot for $i=1,2,\ldots ,n$. Any link in $M$ may be drawn in relation with the corresponding Kirby diagram, containing the surgery link. For a link $L$ in the manifold $M$, denote by $L'$ the corresponding link in $S^{3}$ we obtain by ignoring the surgery along the unknots $U_{1},\ldots ,U_{n}$. The Alexander polynomial of the link $L$ may be expressed by the classical Alexander polynomial of the link $L'\cup U_{1}\cup \ldots \cup U_{n}$, as the following theorem states. 

\begin{thm} \label{th5} Let $M$ be a 3-manifold, obtained by integral surgery on a link $U_{1}\cup U_{2}\cup \ldots \cup U_{n}$, where $U_{i}$ is an unknot for $i=1,2,\ldots ,n$. Then there exist integers $b_{0},b_{1},\ldots ,b_{n}$ and a polynomial $P\in \RR [t] $ such that the Alexander polynomial of any link $L$ in the manifold $M$ may be expressed as $$\Delta _{L}(t)=\frac{\Delta _{L'\cup U_{1}\cup \ldots \cup U_{n}}(t^{b_{0}},t^{b_{1}},\ldots ,t^{b_{n}})}{P(t)}\;.$$
\end{thm}
\begin{proof} We prove the statement of the theorem by induction on $n$ (the number of components of the surgery link). 

%For $n=1$, the manifold $M$ is the lens space $L(p,q)$ and $\Delta _{L}(t)=\frac{\Delta _{L'\cup U_{1}}(t^{p'},\, t^{k'})}{t^{k'}-1}$ by Theorem \ref{th4}. 
For $n=1$, the manifold $M$ is the lens space $L(p,q)$. By Theorem \ref{th4} and Corollary \ref{cor7} we have $\Delta _{L}(t)=\frac{\Delta _{L'\cup U_{1}}(t^{p'},t^{qk'})}{1+t+\ldots +t^{k'-1}}$ if $k'\neq 0$ and $\Delta _{L}(t)=\Delta _{L'\cup U_{1}}(t,t^{q})$ if $k'=0$.

Now suppose the statement of the theorem holds for some $n\in \NN $. Let $M$ be a 3-manifold, obtained by integral surgery on a link $U_{1}\cup U_{2}\cup \ldots \cup U_{n+1}$, where $U_{i}$ is an unknot for $i=1,2,\ldots ,n+1$. Denote by $M'$ the 3-manifold, obtained by the same surgery on the link $U_{1}\cup U_{2}\cup \ldots \cup U_{n}$, but ignoring the surgery on the last component $U_{n+1}$. For a link $L$ in the manifold $M$, let $L''$ be the link in $M'$, corresponding to $L$ (and let $L'$ be the corresponding link in $S^{3}$). By the induction hypothesis, there exist integers $b_{0},b_{1},\ldots ,b_{n}$ and a polynomial $P\in \RR [t]$ such that the Alexander polynomial of $L''$ may be expressed as $$\Delta _{L''}(t)=\frac{\Delta _{L'\cup U_{1}\cup \ldots \cup U_{n}}(t^{b_{0}},t^{b_{1}},\ldots ,t^{b_{n}})}{P(t)}\;.$$ The polynomial $P=P_{n}(t)$ is obtained from the polynomial $P_{1}(t)=1+t+\ldots +t^{k_1-1}$ by a recursive formula $P_{i+1}(t)=P_{i}(t^{p_{i+1}})(1+t+\ldots +t^{k_{i+1}-1})$, where $p_{i}$ and $k_{i}$ are some positive integers for $i=1,\ldots ,n$. Thus, $P(t)$ is a polynomial with real coefficients. 

The manifold $M\backslash L$ is obtained from the manifold $M'\backslash (L''\cup U_{n+1})$ by performing the integral surgery on the component $U_{n+1}$. The presentation of the link group of $L$ in $M$ is obtained from the presentation of the link group of $L''\cup U_{n+1}$ in $M'$ by adjoining the lens relation $$l\colon a^{p}(x_{1}^{\epsilon _{1}}\ldots x_{k}^{\epsilon _{k}})^{-1}\;,$$ where $p$ is the surgery coefficient of the component $U_{n+1}$ and $a$ corresponds to its Wirtinger generator. The other generators $x_{1},\ldots ,x_{k}$ are the Wirtinger generators of those arcs of $L'\cup U_{1}\cup \ldots \cup U_{n}$ that are overcrossing the component $U_{n+1}$. 

We have $$\Delta _{L''\cup U_{n+1}}(x,a)=\frac{\Delta _{L'\cup U_{1}\cup \ldots \cup U_{n}\cup U_{n+1}}(x^{b_{0}},x^{b_{1}},\ldots ,x^{b_{n}},a^{b_{0}})}{P(x)}\;.$$ After abelianization, the term $x_{i}$ in the lens relation is replaced by $x^{b_{0}}$ if it corresponds to a strand of $L'$, and is replaced by $x^{b_{j}}$ if it corresponds to a strand of the unlink $U_{j}$. Thus, after abelianization, the lens relation becomes $a^{p}=x^{r}$ for some integer $r$. Denote $d=\gcd (p,r)$, $p'=\frac{p}{d}$ and $k'=\frac{r}{d}$. 

The Alexander-Fox matrix of $L$ is obtained from the Alexander-Fox matrix $A(x,a)$ of the link $L''\cup U_{n+1}$ (where $x$ corresponds to the link $L''$ and $a$ corresponds to the unlink $U_{n+1}$) by adding the column, corresponding to the lens relation and substituting $x=t^{p'}$, $a=t^{k'}$ (we use the same reasoning as in the proof of Proposition \ref{prop5}). Thus, the Alexander-Fox matrix of $L$ looks like 
\begin{xalignat*}{1}
& A_{L}(t)=\bordermatrix{~ & \quad & \quad & \quad & \quad & \quad & \quad & r_{k} & \ldots & r_{1} & l \cr
x_{m} & \quad & \quad & \quad & \quad & \quad & \quad & \quad & \quad &\quad & 0 \cr
\vdots & \quad & \quad & \quad & \quad & \quad & \quad & \quad & \quad & \quad & \vdots \cr
x_{k+1} & \quad & \quad & \quad & \quad & \quad & \quad & \quad & \quad & \quad & 0 \cr
x_{k} & \quad & \quad & \quad & \quad & \quad & \quad & \quad & \quad & \quad & \beta _{k} \cr
\vdots & \quad & \quad & \quad & \quad & A_{L''\cup U_{n+1}}(t^{p'},t^{k'}) & \quad & \quad & \quad & \quad & \vdots \cr
x_{1} & \quad & \quad & \quad & \quad & \quad & \quad & \quad & \quad & \quad & \beta _{1}\cr
a_{1} & \quad & \quad & \quad & \quad & \quad & \quad & \quad & \quad & \quad & \theta \cr
\vdots & \quad & \quad & \quad & \quad & \quad & \quad &  \quad & \quad & \quad & 0 \cr
a_{k} & \quad & \quad & \quad & \quad & \quad & \quad & \quad & \quad & \quad & 0 \cr } 
\end{xalignat*}
where $\theta =\frac{\partial l}{\partial a}(t^{p'},t^{k'})$ and $\beta _{i}=\frac{\partial l}{\partial x_{i}}(t^{p'},t^{k'})$ for $i=1,\ldots ,k$ and all other  missing submatrices  are zero. The Wirtinger relation $r_{i}\colon x_{i}^{\epsilon _{i}}a_{k-i+1}x_{i}^{-\epsilon _{i}}a_{k-i+2\textrm{(mod $k$)}}$ corresponds to a crossing where the unlink $U_{n+1}$ is overcrossed by $x_{i}$. A calculation of the Fox differentials, that is essentially the same as in the proof of Theorem \ref{th4}, shows that the column $l$ of the Alexander-Fox matrix is a linear combination of the columns $r_{1}, \ldots ,r_{k}$: $$l=\frac{1}{t^{k'}-1}\left (x^{c_{k}}r_{k}+x^{c_{k-1}}r_{k-1}+\ldots +x^{c_{1}}r_{1}\right )$$ for some integers $c_{1},\ldots ,c_{k}$. The Alexander polynomial of $L$ is given by 
$$\Delta _{L}(t)=\gcd \{\det A_{L}(t)^{i,(j_{1},\ldots ,j_{n+1})}|\, 1\leq i\leq m+k,1\leq j_{1}<j_{2}<\ldots <j_{n+1}\leq m+k+n+1\}\;.$$
Denote $B(t)=A_{L''\cup U_{n+1}}(t^{p'},t^{k'})$. By similar reasoning as in the proof of Theorem \ref{th4}, we obtain 
\begin{xalignat*}{1}
& \Delta _{L}(t)=\gcd \{\det B(t)^{i,(j_{1},\ldots ,j_{n})},\, \frac{1}{t^{k'}-1}\det B(t)^{i,(j_{1},\ldots ,j_{n})}|\\
& 1\leq i\leq m+k,1\leq j_{1}<j_{2}<\ldots <j_{n}\leq m+k+n\}\\
& =\frac{t-1}{t^{k'}-1}\, \Delta _{L''\cup U_{n+1}}(t^{p'},t^{k'})
\end{xalignat*} and thus 
$$\Delta _{L}(t)=\frac{\Delta _{L'\cup U_{1}\cup \ldots \cup U_{n}\cup U_{n+1}}(t^{p'b_{0}},t^{p'b_{1}},\ldots ,t^{p'b_{n}},t^{k'b_{0}})}{P(t^{p'})(1+t+\ldots +t^{k'-1})}\;.$$
\end{proof}

Theorem \ref{th5} might be the starting point for a study of the Alexander polynomial of links in other 3-manifolds. Based on our results, one would naturally ask the following question: 
Given a closed, orientable, connected 3-manifold $M$, is there a normalization of the Alexander polynomial of links in $M$ that satisfies a skein relation, and if so, what is this skein relation? 

%\begin{defn}
%As in the case of $L(p,q)$, we can define the normalized Alexander polynomial of a link $L$ in $M$ in terms of the associated single-variable normalized Alexander polynomial in $S^3$:
%$$\nabla_L(t) = \frac{\nabla_{L'\cup U_{1}\cup \ldots \cup U_{n}}(t^{b_0},t^{b_1},\ldots,t^{b_n})}{P(t)}.$$ 
%\end{defn}
%As before, $\nabla_L(t)$ is a normalization of $\Delta_L(t)$.

%\begin{thm}The normalized Alexander polynomial of a link $L$ in any closed connected orientable 3-manifold $M$ satisfies a skein relation\label{cor:skeinmfld}
%$$\nabla _{L_{+}}(t)-\nabla _{L_{-}}(t)=\big(t^{\frac{b_{0}}{2}}-t^{-\frac{b_{0}}{2}}\big)\nabla_{L_{0}}(t)\;.$$\end{thm}
%\begin{proof} 
%For a skein triple $L_{+}$,$L_-$,$L_0$ in $S^3$, the associated links $L'_{+} \cup U_1 \cup \cdots \cup U_n$, $L'_{-} \cup U_1 \cup \cdots \cup U_n$, $L'_{0} \cup U_1 \cup \cdots \cup U_n$ are a skein triple in $S^3$ and we can repeat the same reasoning as in Theorem~\ref{thm:skein1}:
%$$\nabla _{L_{+}}(t)-\nabla _{L_{-}}(t) = \frac{\nabla_{L_+'\cup U_{1}\cup \ldots \cup U_{n}}(t^{b_0},\ldots,t^{b_n})-\nabla_{L_-'\cup U_{1}\cup \ldots \cup U_{n}}(t^{b_0},\ldots,t^{b_n})}{P(t)}=$$

%$$\Big( (t^{b_0})^\frac{1}{2} - (t^{b_0})^{-\frac{1}{2}} \Big)\frac{\nabla_{L_0'\cup U_{1}\cup \ldots \cup U_{n}}(t^{b_0},\ldots,t^{b_n})}{P(t)}=\big(t^{\frac{b_{0}}{2}}-t^{-\frac{b_{0}}{2}}\big)\nabla_{L_{0}}(t)\;.$$

%\end{proof}

%\ack % or \acks
% Put acknowledgements here

%\support{The second author was supported by the Slovenian Research Agency grants J1-8131, J1-7025, and N1-0064.}

\section*{Acknowledgements}
The first author was supported by the Slovenian Research Agency grant N1-0083.\\
The second author was supported by the Slovenian Research Agency grants N1-0083, N1-0064, J1-8131, and J1-7025.
%J1-8131, J1-7025, and N1-0064. 

\end{document}